\newtheorem{theorem}{Theorem}[section]
\newtheorem{lemma}[theorem]{Lemma}
\newtheorem{cor}[theorem]{Corollary}
\newtheorem{corollary}[theorem]{Corollary}
\theoremstyle{definition}
\theoremstyle{remark}
\newtheorem{remark}[theorem]{Remark}
\newcommand{\MM}{\mathbb{M}}
\newcommand{\RR}{\mathbb{R}}
\newcommand{\dd}{\mathrm{d}}
\date{\today}
\begin{document}

\sloppy
\title{Intersection probabilities for flats in hyperbolic space}

\author{Ercan S\"onmez}
\address{Ercan S\"onmez, Ruhr University Bochum, Faculty of Mathematics, Bochum, Germany}
\email{ercan.soenmez\@@{}rub.de}

\author{Panagiotis Spanos}
\address{Panagiotis Spanos, Ruhr University Bochum, Faculty of Mathematics, Bochum, Germany}
\email{panagiotis.spanos\@@{}rub.de} 

\author{Christoph Th\"ale}
\address{Christoph Th\"ale, Ruhr University Bochum, Faculty of Mathematics, Bochum, Germany}
\email{christoph.thaele\@@{}rub.de} 

\date{}

\begin{abstract}
Consider the $d$-dimensional hyperbolic space $\MM_K^d$ of constant curvature $K<0$ and fix a point $o$ playing the role of an origin. Let $\mathbf{L}$ be a uniform random $q$-dimensional totally geodesic submanifold (called $q$-flat) in $\MM_K^d$ passing through $o$ and, independently of $\mathbf{L}$, let $\mathbf{E}$ be a random $(d-q+\gamma)$-flat in $\MM_K^d$ which is uniformly distributed in the set of all $(d-q+\gamma)$-flats intersecting a hyperbolic ball of radius $u>0$ around $o$. We are interested in the distribution of the random $\gamma$-flat arising as the intersection of $\mathbf{E}$ with $\mathbf{L}$. In contrast to the Euclidean case, the intersection $\mathbf{E}\cap \mathbf{L}$ can be empty with strictly positive probability. We determine this probability and the full distribution of $\mathbf{E}\cap \mathbf{L}$. Thereby, we elucidate crucial differences to the Euclidean case. Moreover, we study the limiting behaviour as $d\uparrow\infty$ and also $K\uparrow 0$. Thereby we obtain a phase transition with three different phases which we completely characterize, including a critical phase with distinctive behavior and a phase recovering the Euclidean results. In the background are methods from hyperbolic integral geometry.
\end{abstract}

\keywords{Constant curvature space, geometric probability, hyperbolic geometry, integral geometry, intersection probability, phase transition, stochastic geometry}
\subjclass[2010]{52A22, 52A55, 53C65, 60D05}
\thanks{This work has been supported by the German Research Foundation (DFG) via SPP 2265 Random Geometric Systems}
\maketitle

\baselineskip=18pt

\section{Introduction}

The computation and analysis of intersection or hitting probabilities is a classical theme of geometric probability in which integral-geometric tools play a significant role. To give an example, let $K_0$ and $M$ be two convex bodies in $\RR^d$, and let $X_{K_0,M}$ be a random congruent copy of $M$ meeting $K_0$. Then, for another convex body $N\subset K_0$, the intersection probability $\mathbb{P}(X_{M,K_0}\cap N\neq\varnothing)$ can be determined by means of the principal kinematic formula from integral geometry. This yields an expression in terms of the intrinsic volumes of $K_0$, $M$ and $N$, see \cite[Theorem 8.4.1]{sw} and also \cite[Chapter III.14.2]{s}.  

Other classical intersection probabilities are concerned with the intersection of random linear or random affine subspaces in $\RR^d$. They can be determined by means of appropriate integral-geometric transformation formulas of Blaschke-Petkantschin type, see Chapters 7.2 and 8.4 in \cite{sw} and also \cite{s}. Closely connected to these problems, the following set-up has only recently been considered in \cite{dkt}. Let $\mathbf{L}$ be a random linear subspace in $\RR^d$ of dimension $q\in\{1,\ldots,d-1\}$ and $\mathbf{E}$ be a random affine subspace in $\RR^d$ of dimension $d-q+\gamma$ for some $\gamma\in\{0,\ldots,q-1\}$ which has non-empty intersection with a ball of a given radius $u>0$, and is independent of $\mathbf{L}$. What is the distribution of $\mathbf{E}\cap \mathbf{L}$? To answer this question a new integral-geometric transformation formula had been developed. It mixes the purely linear with the purely affine case, which were previously known in the literature. 

The present paper can be seen as a natural continuation of \cite{dkt} in which we extend the results to $d$-dimensional hyperbolic spaces of constant negative curvature $K<0$. Such an extension is interesting, as new phenomena appear for such negatively curved spaces, which cannot be observed in the Euclidean set-up. For example, it is not necessarily the case that the natural hyperbolic analogues of the linear and affine subspace $\mathbf{L}$ and $\mathbf{E}$, as considered above, do intersect. In fact, the intersection can be empty with strictly positive probability and one of our goals is to compute this probability explicitly in terms of the underlying dimension parameters $d$, $q$, $\gamma$ and the curvature $K$. To better illustrate the different intersection scenarios in hyperbolic space, we present the problem within the Poincaré disk model, see Figure \ref{fig:intersection_configurations}. In this model, the hyperbolic analogues of affine subspaces appear as circular arcs that intersect the boundary of the disk at right angles, or as diameters of the disk. The figure contains four subfigures that visually represent the possible configurations. Moreover, we fully determine the distribution of the intersection $\mathbf{E} \cap \mathbf{L}$ and investigate moment properties of the distance of $\mathbf{E} \cap \mathbf{L}$ to a fixed reference point through which $\mathbf{L}$ passes. Thereby we point out crucial differences to the properties in the Euclidean space. A key result of our study is the identification of a phase transition for the intersection probabilities in hyperbolic spaces. Specifically, we demonstrate three distinct phases that highlight the significant roles played by both curvature and dimension, as characterized in Theorem \ref{phase transition} below. This phase transition reveals an intricate interplay between curvature and dimension in determining the probability of intersections in hyperbolic spaces. By varying these parameters, we elucidate how the geometric properties of the space influence the likelihood of intersection. Our results reveal three phases, emphasizing crucial differences from the Euclidean case. Each phase is concerned with different dynamics between curvature and dimension. In the first phase, where the curvature tends to zero faster than the dimension increases, the intersection probability tends to 1, recovering the Euclidean results. In the second phase, where the dimension tends to infinity faster than the curvature tends to zero, the intersection probability tends to 0. The third, critical phase exhibits distinctive behaviour, where the interplay between curvature and dimension results in a probability that converges to a non-trivial constant that we make explicit. %We fully characterize all these phases, elucidating the intricate differences and transitions between hyperbolic and Euclidean geometries.

The study of the asymptotic behavior of random geometric systems in high dimensions is a popular topic in the fields of stochastic and convex geometry. Often in such situations a phase transition in the behavior of geometric parameters can be observed. In the last years, several results in this direction have been studied and remain a significant focus, see \cite{BonChaGro,DyerFureMcDi,FriePegTko,ChaTkoVrit,BrazGianPafi,HugSchneider22,Pivovarov2007}. Our paper can be regarded as a contribution and an extension to this branch of research, where we study for the first time high-dimensions (and small curvatures) phase transitions in a hyperbolic set-up. The extension from the Euclidean to the hyperbolic set-up also lines up well with another recent trend in stochastic geometry in which the study of random polytopes, random graphs or random tessellations has been extended from Euclidean to hyperbolic spaces as well, see \cite{BenjaminiEtAlAlea,BesauThaele,FountoulakisYukich,GodlandKabluchkoThaeleBetaStar,HansenMueller,HeroldHugThaele,Isokawa2d,Isokawa3d,TykessonCalka} for example.

\begin{figure}[t!]
  \centering
  \begin{subfigure}[b]{0.24\textwidth}
    \includegraphics[width=\textwidth]{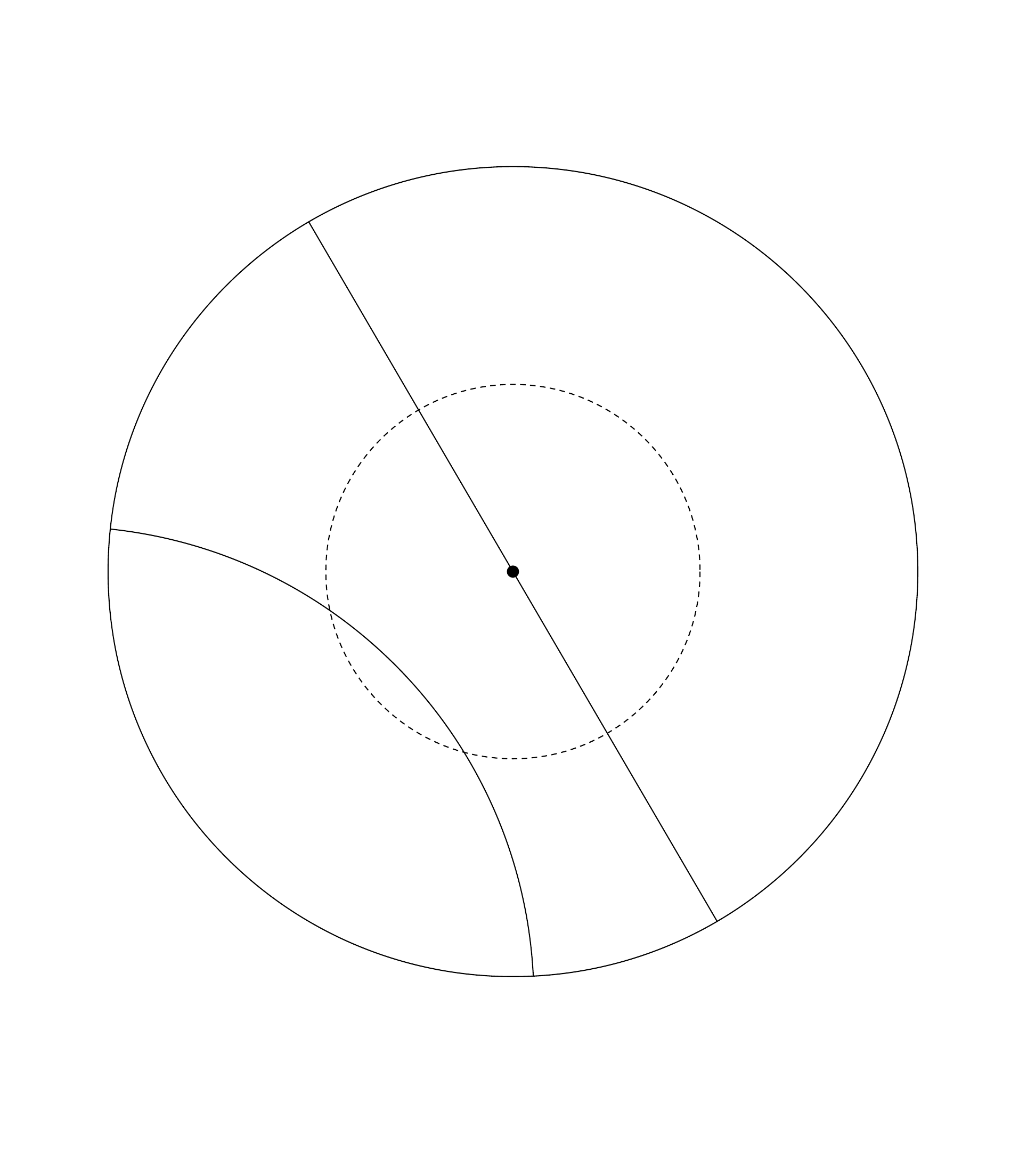}
    %\caption{}
    \label{fig:no_intersection}
  \end{subfigure}
  \hfill
  \begin{subfigure}[b]{0.24\textwidth}
    \includegraphics[width=\textwidth]{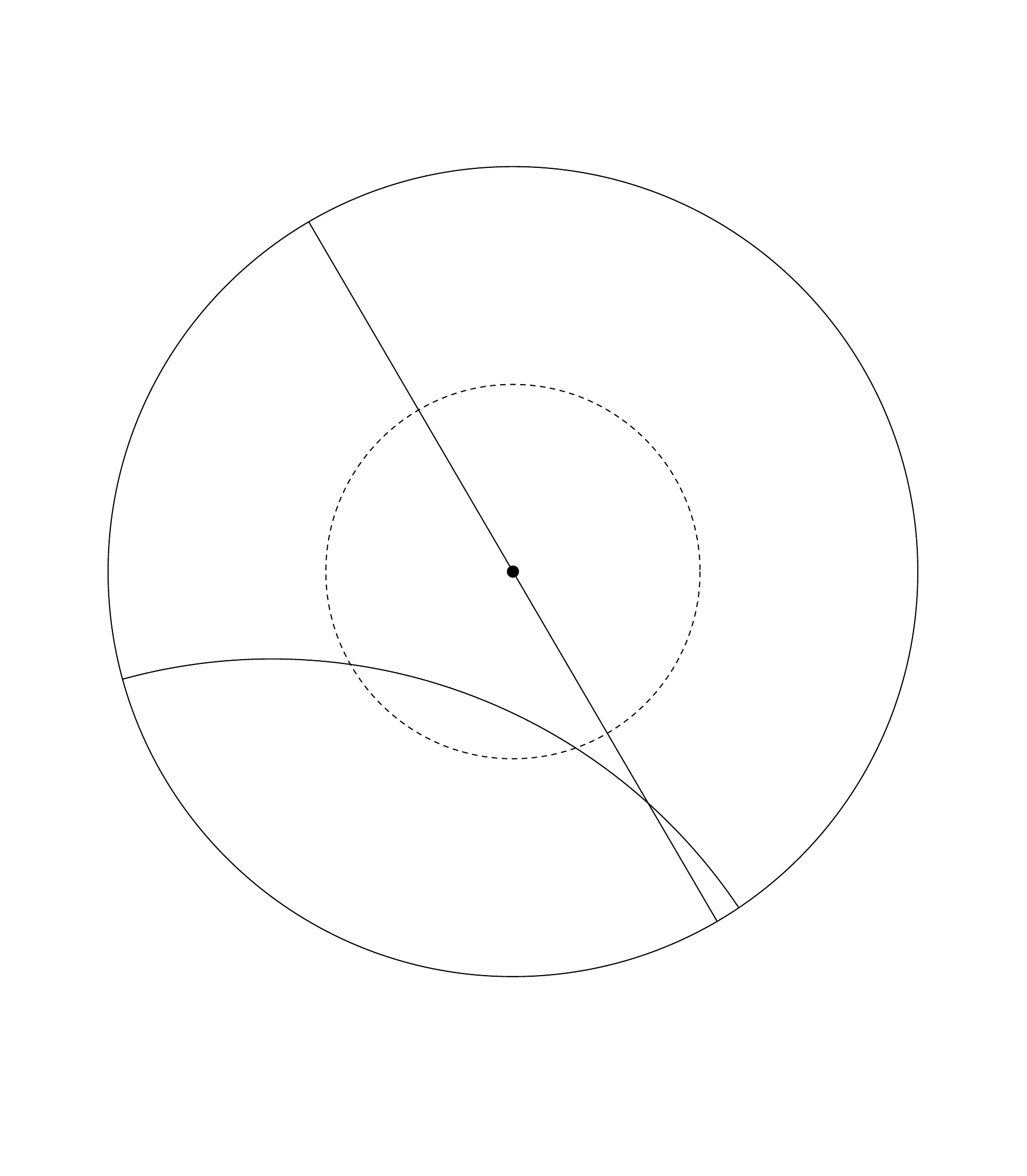}
    %\caption{Intersect Outside}
    \label{fig:intersect_outside}
  \end{subfigure}
  \hfill
  \begin{subfigure}[b]{0.24\textwidth}
    \includegraphics[width=\textwidth]{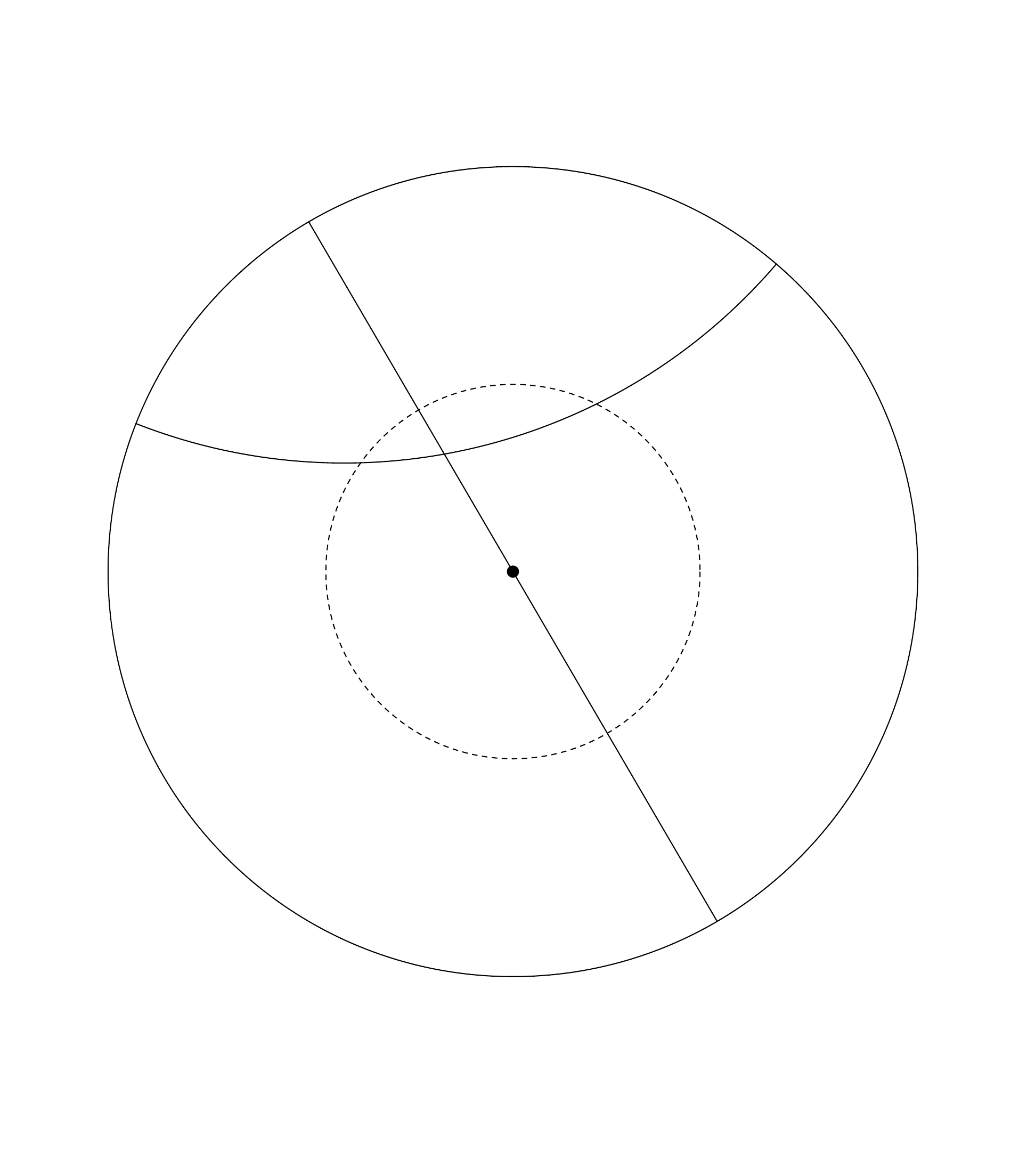}
    %\caption{Intersect Inside}
    \label{fig:intersect_inside}
  \end{subfigure}
  \hfill
  \begin{subfigure}[b]{0.24\textwidth}
    \includegraphics[width=\textwidth]{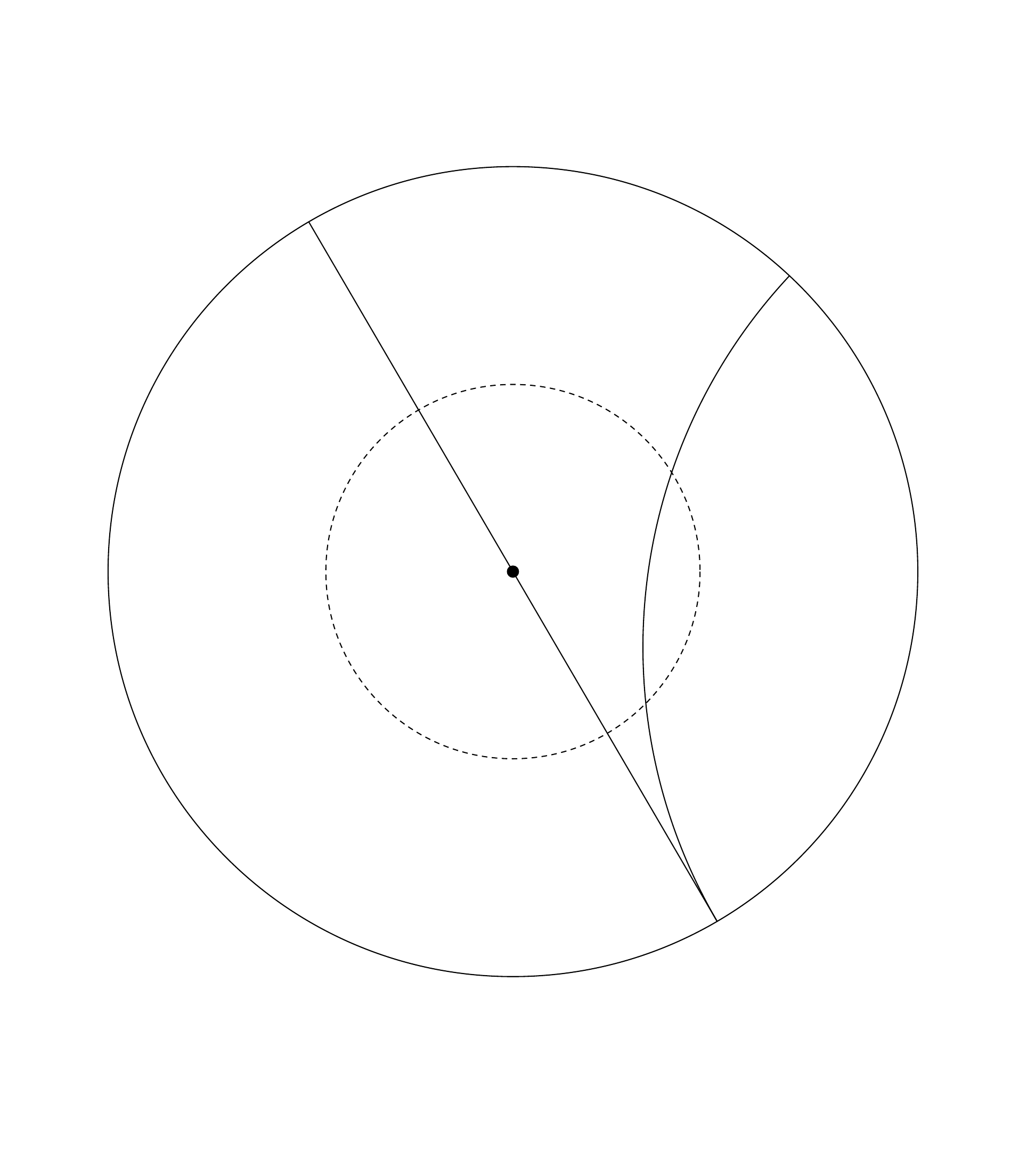}
    %\caption{asymptotic no intersection}
    \label{fig:asymptotic_no_intersection}
  \end{subfigure}
  \caption{Intersection configurations in hyperbolic space. Each picture shows the ball of radius 1 centred at the `origin' $o$ in the Poincaré disk model, a subspace crossing $o$ and a flat intersecting the ball. From left to right the following scenarios are depicted: no intersection, intersection outside the ball, intersection inside the ball, no intersection but the flats share a common ideal point on the hyperbolic boundary.}
  \label{fig:intersection_configurations}
\end{figure}

\medspace

The remaining parts of this paper are structured as follows. In Section \ref{sec:Prelim} we introduce some notation and concepts from hyperbolic geometry. Our main results are formulated in Section \ref{sec:MainResults}. In Section \ref{sec:SpaceK<0} focus is on the space of constant curvature $K<0$, where we develop some integral-geometric formulas which will allow us to use the Blaschke-Petkantschin type from \cite{bht} also in spaces of arbitrary negative curvature parameters. The remaining sections are devoted to the proofs of the main results in Section \ref{sec:MainResults}.

\section{Preliminaries}\label{sec:Prelim}

Before we present the main results of this paper in the next section, we introduce some notation. Fix $d \geq 2$ and $K\leq 0$. By $\MM_K^d$ we indicate the $d$-dimensional standard space of constant sectional curvature $K$. More precisely, if $K=0$, then $\MM_0^d$ can be identified with the $d$-dimensional Euclidean space $\RR^d$, whereas for $K=-1$, $\MM_{-1}^d$ is the hyperbolic space $\mathbb{H}^d$ of constant sectional curvature $-1$. The Riemannian metric on $\MM_K^d$ is denoted by $d_K(\,\cdot\,,\,\cdot\,)$. Throughout this paper we fix a point $o\in\MM_K^d$, which will be referred to as the origin. If $K=0$ we choose for $o$ in fact the point with coordinates $(0,\ldots,0)$. For $u>0$ we write $uB_K^d:=\{x\in\MM_{K}^d:d_K(x,o)\leq u\}$ for the $d$-dimensional closed ball in $\MM_K^d$ of radius $u$ centred at $o$. Further, for a set $F \subset \MM_K^d$ we define $d_K(o,F):=\inf\{d_K(o,x):x\in F\}$ as the distance of $F$ to the origin. 

For $q \in \{0, \ldots , d-1\}$ we write $G_K(d,q)$ for the Grassmannian of $q$-dimensional totally geodesic submanifolds of $\MM_K^d$ that pass through $o$. Moreover, we let $A_K(d,q)$ be the larger space of all $q$-dimensional totally geodesic submanifolds of $\MM_K^d$. We refer to the elements of $A_K(d,q)$ (and consequently also of $G_K(d,q)$) as $q$-flats in what follows. If $K=0$ then elements in $G_0(d,q)$ are the $q$-dimensional linear subspaces and the elements of $A_0(d,q)$ the $q$-dimensional affine subspaces of $\RR^d$. For $K<0$ one can use, for example, the Beltrami-Klein model for $\MM_K^d$ and choose for $o$ the centre of the ball in which the model is realized, see \cite[Chapter 6]{r} and Section \ref{sec:SpaceK<0} below, where the model is described in detail. In this model, elements in $A_K(d,q)$ are represented as the non-empty intersections of $q$-dimensional affine subspaces of $\RR^d$ with the $d$-dimensional open Euclidean ball of radius $1/\sqrt{-K}$. Similarly, the elements of $G_K(d,q)$ correspond to intersections of the $d$-dimensional open Euclidean ball of radius $1/\sqrt{-K}$ with linear subspaces of $\RR^d$ passing through the centre of that ball.

It is well known that the space $G_K(d,q)$ carries a unique Haar probability measure $\nu_{q,K}$ which is invariant under all isometries of $\MM_K^d$ that fix the origin. Furthermore, by Equation (17.41) in \cite{s} the space $A_K(d,q)$ can be supplied with a measure $\mu_{q,K}$ which is invariant under the full isometry group of $\MM_K^d$ and is uniquely determined up to a multiplicative constant. If $K=0$ we choose
\begin{align}\label{eq:MuqEuclidean}
\mu_{q,0}(\,\cdot\,) = \int_{G_0(d,q)}\int_{L^\perp}{\bf 1}{\{L+x\in\,\cdot\,\}}\,\lambda_{L^\perp}(dx)\nu_{q,0}(dL),
\end{align}
where $\lambda_{L^\perp}$ is the Lebesgue measure on $L^\perp$, the orthogonal complement of $L$. On the other hand, if $K<0$ we take
\begin{equation}\label{eq:MuqK}
\mu_{q,K}(\,\cdot\,)=\int_{G_{K}(d,d-q)}\int_{L}\mathbf{1}\{ E(L,x)  \in\, \cdot\,\} \,\cosh^{q}(\sqrt{-K}\, d_K(x,o))\, \mathcal{H}_{K}^{d-q}(\dd x)\nu_{d-q,K}(\dd L).
\end{equation}
Here, $E(L,x)$ is the unique $q$-flat of $\MM_{K}^d$ passing through $x$ orthogonally to $L$ and $\mathcal{H}_K^{d-q}$ stands for the $(d-q)$-dimensional Hausdorff measure with respect to the hyperbolic structure of curvature $K$.

Next, for $u>0$ we let
$$
[uB^d_{K}]_{q,K}:=\{ E \in A_K(d,q) : E \cap uB^d_{K}  \neq \varnothing \}
$$
be the set of  $q$-flats in $\MM_K^d$ that hit the ball of radius $u$ around $o$. The invariant measure $\mu_{q,K}([uB^d_{K}]_{q,K})$ of the set $[uB^d_{K}]_{q,K}$ can be determined by Crofton's formula, see \cite[Chapter 5.1]{sw} for $K=0$ and \cite[Chapter IV.17.4.]{s} for $K<0$. To formulate the result, for $n \in \mathbb{N}$ we define
$$
\omega_n := \frac{2\pi^{n/2}}{\Gamma (\frac{n}{2})}
$$
as the surface content of the $n$-dimensional Euclidean unit sphere. Then it holds that
\begin{align}\label{normconstant}
\nonumber C_K(d,q,u) &:= \mu_{q,K}([uB^d_{K}]_{q,K})\\
\nonumber & = \int_{A_K(d,q)}{\bf 1}{\{E\cap uB_K^d\neq\varnothing\}}\,\mu_{q,K}(\dd E) \\
\nonumber &= \begin{cases}
\omega_{d-q}\int_0^u r^{d-q-1}\,\dd r &: K=0\\
\omega_{d-q} (\sqrt{-K})^{-d+q+1} \int_0^u \cosh^{q}(\sqrt{-K}r)\sinh^{d-q-1}(\sqrt{-K}r) \,\dd r &: K < 0 
\end{cases} \\
&= \begin{cases}
{1\over d-q}\omega_{d-q}u^{d-q} &: K=0\\
\omega_{d-q} (\sqrt{-K})^{-d+q+1} \int_0^u \cosh^{q}(\sqrt{-K}r)\sinh^{d-q-1}(\sqrt{-K}r) \,\dd r &: K < 0  .
\end{cases}
\end{align}

\section{Main results}\label{sec:MainResults}

In this section we will present the main results of this paper. There is a number of crucial differences to the Euclidean case \cite{dkt}, as we already indicated in the introduction. We will highlight them separately below. We start by formalizing the set-up under consideration. For that purpose, we fix dimension parameters $d\geq 2$, $q\in\{1,\ldots,d-1\}$ and $\gamma\in\{0,\ldots,q-1\}$, and a curvature parameter $K\leq 0$. Suppose that
\begin{itemize}
\item $\mathbf{L}$ is a random element of $G_{K}(d,q)$ with distribution $\nu_{q,K}$ 
\end{itemize}
and
\begin{itemize}
\item $\mathbf{E}$ is a random element of $[uB^d_{K}]_{d-q+\gamma,K}$ for some $u>0$ whose distribution is the restriction of $C_{K}(d-q+\gamma,u)^{-1} \mu_{d-q+\gamma,K}$ to $[uB^d_{K}]_{d-q+\gamma,K}$. 
\end{itemize}
Moreover, we assume that $\mathbf{L}$ and $\mathbf{E}$ are stochastically independent and note that restriction of $C_{K}(d-q+\gamma,u)^{-1} \mu_{d-q+\gamma,K}$ to $[uB^d_{K}]_{d-q+\gamma,K}$ is indeed a probability measure according to \eqref{normconstant}. In the following we denote
$$
R_K(x) = R(x) := \frac{\sinh (\sqrt{-K} x)}{\sqrt{-K (1+\sinh^2 (\sqrt{-K} x))}},\qquad x>0.
$$
In sharp contrast to the Euclidean case $K=0$ considered in \cite{dkt} the intersection of $\mathbf{L}$ with $\mathbf{E}$ can be empty with strictly positive probability if $K<0$, see Figure \ref{fig:intersection_configurations}. Our first goal is to determine the probability that $\mathbf{L}$ and $\mathbf{E}$ intersect non-trivially. In what follows we denote the minimum of two real numbers $x,y\in\RR$ by $x\wedge y$.

\begin{theorem} \label{intersection probability}
	For $K<0$ we have that
	$$\mathbb{P} ( \mathbf{E} \cap \mathbf{L} \neq \varnothing ) = \frac{D(d,q,\gamma)\omega_{d-\gamma}}{C_{K}(d,d-q+\gamma,u)}
\int_{0}^{1/\sqrt{-K}}   {r^{q-\gamma-1}}
\int_{0}^{1\wedge \frac{R(u)}{r}} 
\frac{z^q(1-z^2)^{\frac{d-q}{2}-1}}{(1+Kr^2z^2)^{\frac{d+1}{2}} }\, \dd z
\dd r,$$
where $C_{K}(d,d-q+\gamma,u)$ is given by \eqref{normconstant} and $D(d,q,\gamma)$ is defined as
$$D(d,q,\gamma) := \frac{\omega_{\gamma +1} \omega_{q-\gamma} \omega_{d-q}}{\omega_{d-q+\gamma+1} \omega_{d-\gamma}}. $$ 
\end{theorem}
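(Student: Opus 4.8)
The plan is to use the isometry invariance of the construction to fix the flat $\mathbf L$, and then to disintegrate the invariant measure $\mu_{d-q+\gamma,K}$ over the intersection flat via the hyperbolic Blaschke--Petkantschin formula. Since the distribution of $\mathbf E$ is invariant under every isometry of $\MM_K^d$ fixing $o$, and any two $q$-flats through $o$ are related by such an isometry, the probability $\mathbb P(\mathbf E\cap L\neq\varnothing)$ is independent of $L\in G_K(d,q)$. Conditioning on $\mathbf L$ and integrating against $\nu_{q,K}$ thus reduces the statement to computing $\mathbb P(\mathbf E\cap L_0\neq\varnothing)$ for a single fixed reference $q$-flat $L_0$ through $o$, which by the definition of $\mathbf E$ equals
\begin{equation*}
\frac{1}{C_{K}(d,d-q+\gamma,u)}\int_{A_K(d,d-q+\gamma)}\mathbf{1}\{E\cap uB_K^d\neq\varnothing\}\,\mathbf{1}\{E\cap L_0\neq\varnothing\}\,\mu_{d-q+\gamma,K}(\dd E).
\end{equation*}

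I would then pass to the Beltrami--Klein model, in which $\MM_K^d$ is the open Euclidean ball of radius $1/\sqrt{-K}$, the flat $L_0$ becomes a linear subspace through the centre $o$, and $E$ becomes the trace in the ball of a Euclidean affine $(d-q+\gamma)$-subspace $\widetilde E$. Writing $\widetilde M:=\widetilde E\cap L_0$ for the (generically $\gamma$-dimensional) Euclidean intersection, the two events turn into Euclidean incidence conditions constrained to the model ball: using that $uB_K^d$ is the Euclidean ball of radius $R(u)=\tanh(\sqrt{-K}u)/\sqrt{-K}$, one has $E\cap uB_K^d\neq\varnothing$ iff $d_{\mathrm{eucl}}(o,\widetilde E)\le R(u)$, and $E\cap L_0\neq\varnothing$ iff $d_{\mathrm{eucl}}(o,\widetilde M)<1/\sqrt{-K}$. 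This is precisely the source of a strictly positive probability of non-intersection: the Euclidean flats always meet, but their meeting point may lie outside the model ball.

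The heart of the proof is to disintegrate $\mu_{d-q+\gamma,K}$ according to the intersection flat $M=E\cap L_0$. Applying the mixed linear--affine Blaschke--Petkantschin formula of \cite{bht}, in the curvature-$K$ form prepared in Section~\ref{sec:SpaceK<0}, I would split the integral over $E$ into an integral over $\gamma$-flats $M$ inside $L_0\cong\MM_K^q$ and an integral over the position of $E$ transverse to $L_0$ along $M$, the decomposition carrying a Jacobian encoding the angle between $E$ and $L_0$. Transported to the Klein model this Jacobian produces the weight $(1+Kr^2z^2)^{-(d+1)/2}$, where $r:=d_{\mathrm{eucl}}(o,\widetilde M)$ and the transversality parameter $z$ is chosen so that $rz=d_{\mathrm{eucl}}(o,\widetilde E)$; since $\widetilde M\subseteq\widetilde E$ forces $d_{\mathrm{eucl}}(o,\widetilde E)\le d_{\mathrm{eucl}}(o,\widetilde M)$ one automatically has $z\in[0,1]$, and the two incidence conditions become $r<1/\sqrt{-K}$ and $z\le R(u)/r$. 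Carrying out the angular integrations over the relevant Grassmannians and spheres then yields the surface-content constants assembled into $D(d,q,\gamma)\omega_{d-\gamma}$ together with the factor $z^q(1-z^2)^{(d-q)/2-1}$, while the integration over $M$ contributes, in Klein coordinates, the radial weight $r^{q-\gamma-1}$ of $\gamma$-flats at Euclidean distance $r$ in $L_0$. Collecting everything with the limits $0<r<1/\sqrt{-K}$ and $0<z<1\wedge R(u)/r$ gives the asserted identity.

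The step I expect to be the main obstacle is the careful set-up of the hyperbolic Blaschke--Petkantschin disintegration together with the exact transport of all weights to the Klein model. In particular, one has to check that the $\cosh$-weights built into $\mu_{d-q+\gamma,K}$ in \eqref{eq:MuqK} collapse to the algebraic factor $(1+Kr^2z^2)^{-(d+1)/2}$, to pin down the precise geometric meaning of $z$ as the distance ratio $d_{\mathrm{eucl}}(o,\widetilde E)/d_{\mathrm{eucl}}(o,\widetilde M)$, to keep track of the positivity constraint $1+Kr^2z^2>0$ (equivalently that the relevant point stays inside the model ball), and to verify that the Grassmannian and spherical integrations reproduce exactly the constant $D(d,q,\gamma)$. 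The difficulty lies in this bookkeeping of Jacobians and normalizing constants rather than in any isolated hard estimate.
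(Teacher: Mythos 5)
Your proposal is correct and follows essentially the same route as the paper: pass to the Beltrami--Klein model, where the hyperbolic measure acquires the weight $(1+K\|\tau(E)\|^2)^{-\frac{d+1}{2}}$ (this is Lemma \ref{lem:subst}), and then disintegrate over the intersection flat via the mixed linear--affine Blaschke--Petkantschin formula --- which the paper invokes in its Euclidean form from \cite{dkt} rather than in a curvature-$K$ form --- with exactly your identifications of $r$, $z$, the incidence conditions and the constants. The only cosmetic differences are that you fix a reference flat $L_0$ by invariance at the outset, whereas the paper keeps the average over $G_K(d,q)$ and lets the formula of \cite{dkt} absorb it (so the power $r^{q-\gamma-1}$ arises there as $r^{-(d-q)}\cdot r^{d-\gamma-1}$ rather than as the radial density of $\gamma$-flats inside $L_0$), and that the formula you attribute to \cite{bht} is the one from \cite{dkt}.
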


As a consequence, we can determine the limit in high dimensions, that is, as $d\to\infty$, of the probability that $\mathbf{E}$ and $\mathbf{L}$ intersect non-trivially. 

\begin{corollary}\label{cor:IntersectionProbabdtoInfinity}
For fixed $K<0$ it holds that
$$
\lim_{d\to \infty}\mathbb{P} ( \mathbf{E} \cap \mathbf{L}\neq \varnothing ) = 0.
$$
\end{corollary}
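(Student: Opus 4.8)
The plan is to derive the corollary directly from the explicit expression in Theorem \ref{intersection probability} by an asymptotic analysis of its three ingredients as $d\to\infty$, with $K<0$, $q$, $\gamma$ and $u$ all held fixed. Writing the probability as the product of the prefactor $\frac{D(d,q,\gamma)\omega_{d-\gamma}}{C_K(d,d-q+\gamma,u)}$ and the double integral, I would show that the integral stays bounded uniformly in $d$, whereas the prefactor tends to $0$ exponentially fast; the product then vanishes.

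First I would establish the uniform bound on the integral. The decisive observation is that on the domain of integration the inner upper limit forces $rz\le R(u)=\tanh(\sqrt{-K}\,u)/\sqrt{-K}$, so that $-Kr^2z^2\le\tanh^2(\sqrt{-K}\,u)<1$ and hence $1+Kr^2z^2\ge\cosh^{-2}(\sqrt{-K}\,u)>0$ throughout. Moreover $r\le 1/\sqrt{-K}$ gives $-Kr^2\le 1$, so $1-z^2\le 1+Kr^2z^2$ and therefore $\frac{1-z^2}{1+Kr^2z^2}\le 1$. Splitting the integrand as
\[
\frac{z^q(1-z^2)^{\frac{d-q}{2}-1}}{(1+Kr^2z^2)^{\frac{d+1}{2}}}=z^q\Big(\frac{1-z^2}{1+Kr^2z^2}\Big)^{\frac{d-q}{2}-1}(1+Kr^2z^2)^{-\frac{q+3}{2}},
\]
one sees that for $d\ge q+2$ the middle factor is at most $1$ and the last factor is at most $\cosh^{q+3}(\sqrt{-K}\,u)$, so the integrand is bounded by the $d$-independent constant $\cosh^{q+3}(\sqrt{-K}\,u)$. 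Integrating this trivial bound over the bounded domain, using $q-\gamma-1\ge 0$ so that no singularity at $r=0$ arises, yields a finite constant $C_0$ with integral $\le C_0$ for all large $d$.

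Next I would treat the prefactor. Using $D(d,q,\gamma)\omega_{d-\gamma}=\frac{\omega_{\gamma+1}\omega_{q-\gamma}\omega_{d-q}}{\omega_{d-q+\gamma+1}}$ and $\omega_n=2\pi^{n/2}/\Gamma(n/2)$, the only $d$-dependent part is $\frac{\omega_{d-q}}{\omega_{d-q+\gamma+1}}=\pi^{-\frac{\gamma+1}{2}}\frac{\Gamma(\frac{d-q+\gamma+1}{2})}{\Gamma(\frac{d-q}{2})}$, which by Stirling's formula grows only polynomially, of order $d^{(\gamma+1)/2}$. On the other hand, from \eqref{normconstant} and for any fixed $a\in(0,u)$,
\[
C_K(d,d-q+\gamma,u)\ge\omega_{q-\gamma}(\sqrt{-K})^{1-q+\gamma}\int_a^u\cosh^{d-q+\gamma}(\sqrt{-K}\,r)\sinh^{q-\gamma-1}(\sqrt{-K}\,r)\,\dd r\ge c_1\,\cosh^{d-q+\gamma}(\sqrt{-K}\,a)
\]
for a constant $c_1>0$ independent of $d$ (bounding $\cosh$ below by its value at $a$ and the positive $\sinh$ factor by its minimum on $[a,u]$). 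Since $\cosh(\sqrt{-K}\,a)>1$, this lower bound grows exponentially in $d$, so the prefactor is of order $d^{(\gamma+1)/2}\cosh^{-(d-q+\gamma)}(\sqrt{-K}\,a)\to 0$.

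Combining the two estimates gives $\mathbb{P}(\mathbf{E}\cap\mathbf{L}\neq\varnothing)\le C_0\cdot\frac{D(d,q,\gamma)\omega_{d-\gamma}}{C_K(d,d-q+\gamma,u)}\to 0$, which is the claim. The one step that requires genuine care — and which I expect to be the main obstacle — is the uniform boundedness of the integral: a priori the factor $(1+Kr^2z^2)^{-(d+1)/2}$ could itself blow up and generate exponential growth competing with the $\cosh^{-d}$ decay of the prefactor, and it is precisely the constraint $rz\le R(u)$, coming from the conditioning on $\mathbf{E}$ hitting $uB^d_K$, that keeps $1+Kr^2z^2$ bounded away from $0$ and rules this out. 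The remaining prefactor asymptotics are routine Stirling and exponential-growth estimates.
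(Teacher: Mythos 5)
Your proposal is correct, but it takes a genuinely different route from the paper. The paper bounds $C_K(d,d-q+\gamma,u)$ below only by a $d$-independent constant (via $\cosh(x)\geq 1$), so its prefactor grows polynomially like $d^{(\gamma+1)/2}$; the decay must then come from the double integral, for which the paper invokes the estimate $O\bigl(d^{-(q+1)/2}\bigr)$ established later in the proof of the phase-transition theorem (Equation \eqref{asympsup2}) via Laplace's method, and concludes because $q>\gamma$. You instead prove only that the double integral is \emph{bounded} uniformly in $d$ --- an elementary pointwise estimate whose key geometric input is exactly right: the constraint $rz\leq R(u)=\tanh(\sqrt{-K}\,u)/\sqrt{-K}$ keeps $1+Kr^2z^2\geq \cosh^{-2}(\sqrt{-K}\,u)$, while $r\leq 1/\sqrt{-K}$ makes the $d$-dependent ratio $\bigl((1-z^2)/(1+Kr^2z^2)\bigr)^{\frac{d-q}{2}-1}$ at most $1$, and your exponent bookkeeping $(1+Kr^2z^2)^{-\frac{q+3}{2}}$ is correct --- and you extract the decay from $C_K$ itself, which indeed grows like $\cosh^{d-q+\gamma}(\sqrt{-K}\,a)$ once the Crofton integral is restricted to $[a,u]$. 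Exponential growth of the denominator then beats the polynomial growth $d^{(\gamma+1)/2}$ of the numerator. What each approach buys: the paper's argument is nearly free given that the sharp Laplace asymptotics are needed anyway for Theorem \ref{phase transition}, and it identifies the precise polynomial rate; your argument is self-contained, avoids Laplace's method and the forward reference, makes transparent that the vanishing is driven by the exponential volume growth encoded in $C_K$ (a purely hyperbolic phenomenon), and does not even need the comparison $q>\gamma$ --- you use $q-\gamma-1\geq 0$ only for integrability at $r=0$, where any exponent exceeding $-1$ would do.
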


\begin{figure}[t]
	\centering
	\includegraphics[width=0.6\textwidth]{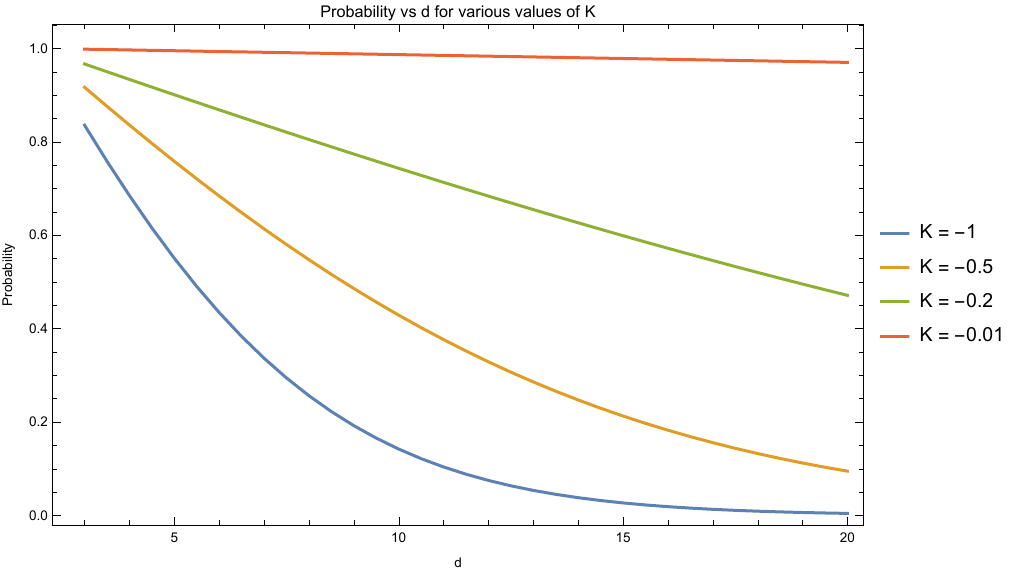}
	\caption{The probability $\mathbb{P} ( \mathbf{E} \cap \mathbf{L} \neq \varnothing )$ plotted against the dimension $d$ for different choices of curvatures $K$.}
	\label{fig:dimensionplot}
\end{figure}

Figure \ref{fig:dimensionplot} illustrates the findings of Corollary \ref{cor:IntersectionProbabdtoInfinity}. A notable difference to the Euclidean setting treated in \cite{dkt} is that for spaces of strictly negative curvature we have $\lim_{d\to \infty}\mathbb{P} ( \mathbf{E} \cap \mathbf{L} \neq \varnothing ) = 0$, whereas in the Euclidean space it holds $\mathbb{P} ( \mathbf{E} \cap \mathbf{L} \neq \varnothing ) = 1$ for every $d \in \mathbb{N}$.

Our next goal is to determine the distribution of $\mathbf{E}\cap \mathbf{L}$. By isometry invariance of the distributions of $\mathbf{L}$ and $\mathbf{E}$, all information about this distribution is contained in the random variable $d_K(o, \mathbf{E} \cap \mathbf{L} )$, which describes the distance of the intersection to the origin and takes values in the extended positive real half-axis $[0,+\infty]$. The following result describes the distribution of this random variable.

\begin{theorem}\label{density}
Let $K<0$. Then the law $\mathcal{L}(K)$ of the random variable  $ d_K(o,\mathbf{E} \cap \mathbf{L} )$ satisfies $$\mathcal{L}(K)=a_K\delta_{+\infty}+\mathcal{L}_{\rm ac}(K),$$ where $a_K:=\mathbb{P} \left( d_K(o,\mathbf{E} \cap \mathbf{L} \right) = \infty ) = \mathbb{P} ( \mathbf{E} \cap \mathbf{L} = \varnothing ) >0$,  $\delta_{+\infty}$ is the Dirac measure concentrated at $+\infty$ and $\mathcal{L}_{\rm ac}(K)$ is a measure on $(0,\infty)$ which is absolutely continuous with respect to the Lebesgue measure having density
$$f_{d,q,\gamma,K}(\delta) = \frac{D(d,q,\gamma)\omega_{d-\gamma}}{C_{K}(d,d-q+\gamma,u)}  {R(\delta)^{q-\gamma-1}\over\cosh^2(\sqrt{-K}\delta)} \int_0^{1\wedge \frac{ R(u)}{R(\delta)}} z^q (1-z^2)^{\frac{d-q}{2}-1} ( 1+KR(\delta)^2z^2)^{-\frac{d+1}{2}} \,\dd z.$$
\end{theorem}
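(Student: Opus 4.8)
The plan is to read Theorem \ref{density} off from the computation behind Theorem \ref{intersection probability}, by refraining from integrating out the position of the intersection in the last step. I would first record the structural decomposition. Since the laws of $\mathbf{L}$ and of $\mathbf{E}$ are invariant under all isometries of $\MM_K^d$ fixing $o$, the law of $\mathbf{E}\cap\mathbf{L}$ is determined by the scalar $\delta:=d_K(o,\mathbf{E}\cap\mathbf{L})\in[0,+\infty]$, where $\delta=+\infty$ precisely on the event $\{\mathbf{E}\cap\mathbf{L}=\varnothing\}$, in accordance with the convention $d_K(o,\varnothing)=\inf\varnothing=+\infty$. Hence $\mathcal{L}(K)$ splits into an atom at $+\infty$ of mass $a_K=\mathbb{P}(\mathbf{E}\cap\mathbf{L}=\varnothing)$ plus a measure on $(0,\infty)$ of total mass $\mathbb{P}(\mathbf{E}\cap\mathbf{L}\neq\varnothing)$. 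The strict positivity $a_K>0$ amounts to $\mathbb{P}(\mathbf{E}\cap\mathbf{L}\neq\varnothing)<1$, which I would obtain by bounding the integral in Theorem \ref{intersection probability} strictly below $1$, or geometrically by exhibiting disjoint configurations of positive probability as in Figure \ref{fig:intersection_configurations}.

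The crux is to identify the geometric meaning of the outer integration variable $r$ in Theorem \ref{intersection probability}. In the Beltrami--Klein realisation of $\MM_K^d$ inside the Euclidean ball of radius $1/\sqrt{-K}$, a point at hyperbolic distance $\delta$ from $o$ lies at Euclidean distance
$$
\frac{\tanh(\sqrt{-K}\,\delta)}{\sqrt{-K}}=\frac{\sinh(\sqrt{-K}\,\delta)}{\sqrt{-K\,(1+\sinh^2(\sqrt{-K}\,\delta))}}=R(\delta)
$$
from the centre. I would verify, by retracing the Blaschke--Petkantschin-type argument of Section \ref{sec:SpaceK<0} used for Theorem \ref{intersection probability}, that $r$ is exactly this Euclidean radial coordinate of the foot point of the $\gamma$-flat $\mathbf{E}\cap\mathbf{L}$: the factor $r^{q-\gamma-1}$ is then the polar Jacobian of the $(q-\gamma)$-dimensional foot-point space inside $\mathbf{L}$, and the restriction $z\le R(u)/r$ encodes that $\mathbf{E}$ meets $uB_K^d$, whose Klein image is the ball of Euclidean radius $R(u)$. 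In other words, the computation underlying Theorem \ref{intersection probability} already yields the joint sub-probability that $\mathbf{E}\cap\mathbf{L}\neq\varnothing$ and that its foot point has Euclidean radial coordinate $r=R(\delta)$.

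It then remains to transport this sub-probability density from $r$ to $\delta$. The map $R\colon(0,\infty)\to(0,1/\sqrt{-K})$ is a smooth increasing bijection with
$$
R'(\delta)=\frac{1}{\cosh^2(\sqrt{-K}\,\delta)},
$$
so substituting $r=R(\delta)$ and $\dd r=R'(\delta)\,\dd\delta$ in Theorem \ref{intersection probability} turns $r^{q-\gamma-1}$ into $R(\delta)^{q-\gamma-1}$, the upper limit $1\wedge R(u)/r$ into $1\wedge R(u)/R(\delta)$, and $(1+Kr^2z^2)^{-(d+1)/2}$ into $(1+KR(\delta)^2z^2)^{-(d+1)/2}$, while producing the additional factor $\cosh^{-2}(\sqrt{-K}\,\delta)$. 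This is precisely $f_{d,q,\gamma,K}(\delta)$, and integrating it over $(0,\infty)$ returns the probability in Theorem \ref{intersection probability}; thus $f_{d,q,\gamma,K}$ is the density of $\mathcal{L}_{\rm ac}(K)$ and the remaining mass $a_K$ sits at $+\infty$.

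The main obstacle is the middle step: justifying rigorously that $r$ is the radial coordinate of the intersection, i.e. that the integral in Theorem \ref{intersection probability} is a genuine disintegration of $\{\mathbf{E}\cap\mathbf{L}\neq\varnothing\}$ along $d_K(o,\mathbf{E}\cap\mathbf{L})$, with no residual averaging over the foot-point position hidden inside $D(d,q,\gamma)$ or the inner $z$-integration. Once this bookkeeping is secured, the change of variables and the placement of the atom are routine.
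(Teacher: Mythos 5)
Your proposal is correct and is essentially the paper's own proof: the paper also obtains the CDF by re-running the Blaschke--Petkantschin computation of Theorem \ref{intersection probability} with the extra indicator $\mathbf{1}\{d_0(o,E\cap L)\le R(\delta)\}$, which yields exactly the integral of Theorem \ref{intersection probability} truncated at $r=R(\delta)$ --- this re-run is precisely the rigorous justification, which you correctly flag as the crux, that the outer variable $r$ is the Klein-model radial coordinate of the intersection flat. The paper then differentiates in $\delta$ using ${\dd R(\delta)/\dd\delta}=\cosh^{-2}(\sqrt{-K}\,\delta)$, matching your change of variables, and places the remaining mass $a_K>0$ at $+\infty$ via Theorem \ref{intersection probability}.
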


\begin{figure}[t]
	\centering
	\includegraphics[width=\textwidth]{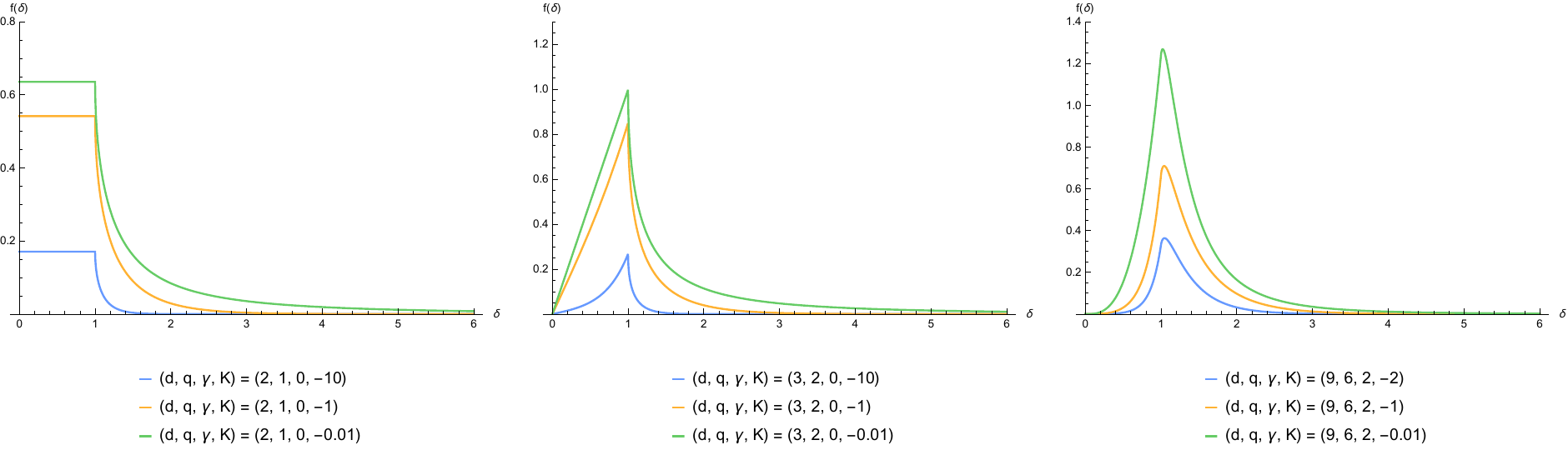}
	\caption{The density $f_{d,q,\gamma,K}$ in Theorem \ref{density} for various choices of the parameters $d$, $q$, $\gamma$ and $K$.}
	\label{fig:densityplot}
\end{figure}

\begin{remark}\label{re:changecurva}
It is possible to transfer the problems treated in Theorems \ref{intersection probability} and \ref{density} from  the space $\mathbb{M}^d_K$ of arbitrary negative curvature $K<0$ to the usual model $\mathbb{H}^d$ of curvature $-1$ by choosing a suitable radius $u$ for $[uB_{-1}]$. More precisely, denote $p_{K,u}$ the probability that $\mathbf{E}$ and $\mathbf{L}$, as above, in $\mathbb{M}^d_{K}$ intersect. Then $p_{K,u}=p_{-1,v}$ with the radius $v$ given by $v=\sqrt{-K}u$.  Additionally, if we let $f^{(u)}_{d,q,\gamma,K}$ be the density of the random variable $d_K(o,\mathbf{E}\cap \mathbf{L})$, then we obtain the  relation:
$
f^{(u)}_{d,q,\gamma,K}(\delta)
=\sqrt{-K}
f^{(v)}_{d,q,\gamma,-1}(\sqrt{-K}\delta),
$
again with $v=\sqrt{-K}u $. Details are provided at the beginning of Section \ref{sec:phases}.
\end{remark}

As in the Euclidean case investigated in \cite{dkt} we can study moment properties of the random variable $d_K(o,\mathbf{E} \cap \mathbf{L} )$. However, in contrast to that case, all positive moments are infinite for $K<0$, because of the fact that the distribution $\mathcal{L}(K)$ has an atom with positive mass at $+\infty$. {On the other hand, conditionally on the event that $d_K(o,\mathbf{E}\cap\mathbf{L})<\infty$, we can show that all positive moments of the random variable $d_K(o,\mathbf{E} \cap \mathbf{L} )$ are finite.}

\newpage

\begin{cor}\label{moments}
	Let $K<0$.
	\begin{itemize}
	 \item[(i)] It holds $\mathbb{E} [d_K(o,\mathbf{E} \cap \mathbf{L} )^\alpha] < \infty$ if and only if $\alpha \in (\gamma-q, 0]$. 
	 \item[(ii)] It holds $\mathbb{E} [d_K(o,\mathbf{E} \cap \mathbf{L} )^\alpha  \mid d_K(o,\mathbf{E}\cap\mathbf{L})<\infty] < \infty$ for every $\alpha>0$. 
	 \end{itemize}
\end{cor}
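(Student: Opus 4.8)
The plan is to read everything off the law $\mathcal{L}(K)=a_K\delta_{+\infty}+\mathcal{L}_{\rm ac}(K)$ supplied by Theorem \ref{density}, reducing both claims to the behaviour of the density $f:=f_{d,q,\gamma,K}$ near the endpoints $\delta\downarrow0$ and $\delta\to\infty$, together with the presence of the atom $a_K>0$. The first step I would take is to simplify $R$: using $1+\sinh^2=\cosh^2$ one gets $R(x)=\tanh(\sqrt{-K}x)/\sqrt{-K}$, so that $R(x)\sim x$ as $x\downarrow 0$, while $R$ increases strictly to the finite value $R_\infty:=1/\sqrt{-K}$ as $x\to\infty$. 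Since $R(\delta)<R_\infty$ forces $1+KR(\delta)^2z^2>0$ on the whole range of integration, $f$ is continuous and locally bounded on $(0,\infty)$, so all integrability questions below localize at $0$ and $+\infty$.

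Next I would establish the two endpoint asymptotics. As $\delta\downarrow0$ the inner upper limit $1\wedge R(u)/R(\delta)$ equals $1$, $\cosh^2(\sqrt{-K}\delta)\to1$, and $1+KR(\delta)^2z^2\to1$ uniformly in $z\in[0,1]$; dominated convergence (dominating the integrand by a constant times the integrable $z^q(1-z^2)^{\frac{d-q}{2}-1}$, valid since $d>q$) then gives $f(\delta)\sim c_0\,\delta^{q-\gamma-1}$ for an explicit positive constant $c_0$. As $\delta\to\infty$ one has $R(u)/R(\delta)\to\tanh(\sqrt{-K}u)<1$, so the upper limit stays bounded away from $1$ and the integrand is uniformly bounded there; hence the inner integral and $R(\delta)^{q-\gamma-1}$ stay bounded while the prefactor $\cosh^{-2}(\sqrt{-K}\delta)$ decays like $e^{-2\sqrt{-K}\delta}$, giving $f(\delta)=O(e^{-2\sqrt{-K}\delta})$. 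In particular $\delta^\alpha f(\delta)$ is integrable near $+\infty$ for every $\alpha\in\RR$.

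With these in hand the corollary falls out. For (i): when $\alpha>0$ the quantity $d_K(o,\mathbf{E}\cap\mathbf{L})^\alpha$ is $+\infty$ on the atom $\{d_K(o,\mathbf{E}\cap\mathbf{L})=\infty\}$ of mass $a_K>0$, forcing an infinite moment; when $\alpha=0$ the moment is $1$; and when $\alpha<0$ the atom contributes $a_K\cdot(+\infty)^\alpha=0$, leaving $\int_0^\infty\delta^\alpha f(\delta)\,\dd\delta$, which converges at $+\infty$ always and near $0$ behaves like $\delta^{\alpha+q-\gamma-1}$, hence converges iff $\alpha>\gamma-q$; together these three cases give finiteness exactly on $(\gamma-q,0]$. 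For (ii) I would note that conditioning on $\{d_K(o,\mathbf{E}\cap\mathbf{L})<\infty\}$ deletes the atom and divides by $1-a_K\in(0,1)$, so that for $\alpha>0$ the conditional moment is a constant multiple of $\int_0^\infty\delta^\alpha f(\delta)\,\dd\delta$; near $0$ the exponent $\alpha+q-\gamma-1$ exceeds $q-\gamma-1\geq -1$ because $q-\gamma\geq1$, so the integral converges at $0$, and it converges at $+\infty$ by the exponential decay, whence it is finite for all $\alpha>0$.

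The main obstacle is the uniform control of the inner $z$-integral as a function of $\delta$, needed for both asymptotics: near $0$ one must justify passing the limit inside the integral (the factor $(1+KR(\delta)^2z^2)^{-\frac{d+1}{2}}$ has to be dominated uniformly, which works because it stays bounded for small $\delta$ and tends to $1$), and near $\infty$ one must verify that the integral neither blows up nor vanishes, which hinges on the upper limit staying strictly below $1$ so that the integrand never approaches its singularity at $z=1$. Everything else is bookkeeping with the power and exponential scales.
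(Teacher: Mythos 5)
Your proposal is correct and follows essentially the same route as the paper: both read everything off the decomposition $\mathcal{L}(K)=a_K\delta_{+\infty}+\mathcal{L}_{\rm ac}(K)$ from Theorem \ref{density}, use the atom at $+\infty$ to kill all positive unconditional moments, identify $f_{d,q,\gamma,K}(\delta)\sim c\,\delta^{q-\gamma-1}$ as $\delta\downarrow 0$ (via $R(\delta)\sim\delta$) to settle the case $\alpha<0$, and invoke the exponential decay of the density at infinity for part (ii). Your write-up merely fills in the dominated-convergence and uniform-boundedness details that the paper leaves implicit.
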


Observe that the statement of Corollary \ref{moments} (ii) is in sharp contrast to the Euclidean case, since, by \cite[Corollary 5.2]{dkt}, one has that
$$ \mathbb{E} [d_0(o,\mathbf{E} \cap \mathbf{L} )^\alpha  \mid d_0(o,\mathbf{E}\cap\mathbf{L})<\infty] = \mathbb{E} [d_0(o,\mathbf{E} \cap \mathbf{L} )^\alpha  ]= \infty, \quad \text{ if } \alpha \geq \gamma +1. $$

\begin{figure}[t]
	\centering
	\includegraphics[width=0.6\textwidth]{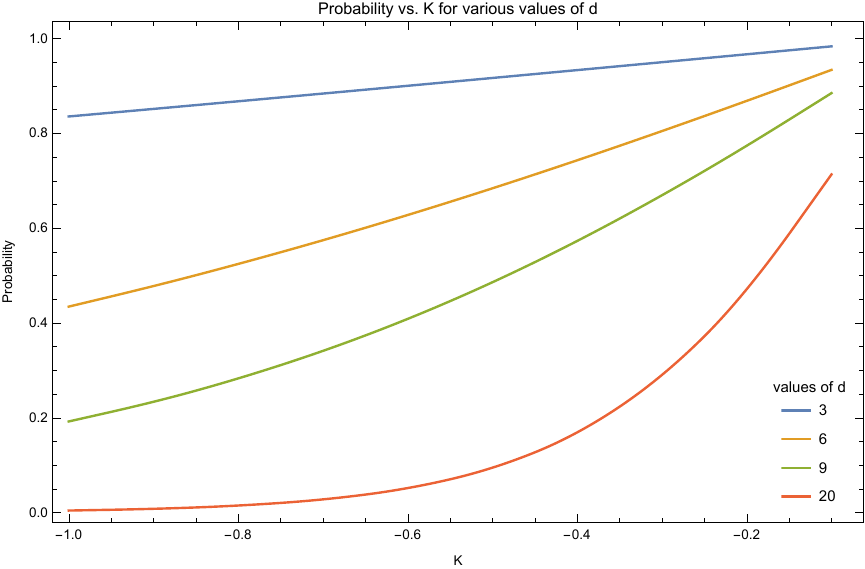}
	\caption{The probability $\mathbb{P} ( \mathbf{E} \cap \mathbf{L} \neq \varnothing )$ plotted against the curvature $K$ for different choices of dimensions $d$.}
	\label{fig:curvatureplot}
\end{figure}

Figure \ref{fig:densityplot} displays the density $f_{d,q,\gamma,K}$ in Theorem \ref{density} for different choices of the dimension parameters $d,q,\gamma$ and curvatures $K$. In particular, it can be observed that for small values of the curvature $K$ the density resembles the one in the Euclidean setting, see \cite[Theorem 5.2 and Figure 5.1]{dkt}. This motivates our next considerations. Namely, we want to demonstrate that the results in the Euclidean case arise from our findings in the limit as the curvature $K$ tends to zero. For the intersection probability $\mathbb{P}(\mathbf{E}\cap \mathbf{L}\neq\varnothing)$ this is illustrated in Figure \ref{fig:curvatureplot}. We recall that $\mathcal{L}(K)$ is the law of the random variable $d_K(o,\mathbf{E}\cap \mathbf{L})$, which for $K=0$ has been investigated in \cite{dkt}.
\begin{theorem}\label{curvaturezero}
It holds that 
$$
\mathcal{L}(K) \longrightarrow \mathcal{L}(0)
$$
weakly, as $K\uparrow 0$. In particular,
$$
\lim_{K\uparrow 0}\mathbb{P} ( \mathbf{E} \cap \mathbf{L} \neq \varnothing ) = 1.
$$
\end{theorem}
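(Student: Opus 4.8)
The plan is to argue directly from the explicit description of the law furnished by Theorem~\ref{density}, namely $\mathcal{L}(K)=a_K\delta_{+\infty}+\mathcal{L}_{\rm ac}(K)$ with $\mathcal{L}_{\rm ac}(K)$ carrying the density $f_{d,q,\gamma,K}$, and to reduce the statement to two ingredients: (i) pointwise convergence $f_{d,q,\gamma,K}(\delta)\to f_{d,q,\gamma,0}(\delta)$ for each $\delta>0$, where $f_{d,q,\gamma,0}$ is the Euclidean density of $\mathcal{L}(0)$ from \cite[Theorem 5.2]{dkt}, and (ii) the vanishing of the atom, $a_K\to 0$. Throughout I write $s:=\sqrt{-K}$, so that $K\uparrow 0$ corresponds to $s\downarrow 0$. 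The crucial simplification is that, using $\sqrt{1+\sinh^2}=\cosh$, one has $R(\delta)=s^{-1}\tanh(s\delta)$, whence $K R(\delta)^2 z^2=-\tanh^2(s\delta)\,z^2$ and $1+KR(\delta)^2z^2=1-\tanh^2(s\delta)z^2\in[\cosh^{-2}(s\delta),1]$ for $z\in[0,1]$.

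First I would record the elementary limits of the building blocks of $f_{d,q,\gamma,K}$ as $s\downarrow 0$, for fixed $\delta>0$: $R(\delta)\to\delta$, $R(u)\to u$, $\cosh^2(s\delta)\to 1$, the upper limit $1\wedge R(u)/R(\delta)\to 1\wedge u/\delta$, and $1+KR(\delta)^2z^2\to 1$ uniformly in $z\in[0,1]$. Since the inner integrand is dominated by $\cosh^{d+1}(s\delta)\,z^q(1-z^2)^{\frac{d-q}{2}-1}\le C\,z^q(1-z^2)^{\frac{d-q}{2}-1}$ (integrable on $[0,1]$ because $\tfrac{d-q}{2}-1>-1$), dominated convergence yields convergence of the inner $z$-integral to $\int_0^{1\wedge u/\delta}z^q(1-z^2)^{\frac{d-q}{2}-1}\,\dd z$. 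For the normalizing constant I would substitute $t=sr$ in \eqref{normconstant} to get
\[
C_K(d,d-q+\gamma,u)=\omega_{q-\gamma}\,s^{-(q-\gamma)}\int_0^{su}\cosh^{d-q+\gamma}(t)\sinh^{q-\gamma-1}(t)\,\dd t,
\]
and use $\sinh t\sim t$, $\cosh t\to 1$ to see the integral is asymptotic to $(su)^{q-\gamma}/(q-\gamma)$, so that $C_K(d,d-q+\gamma,u)\to \tfrac{1}{q-\gamma}\omega_{q-\gamma}u^{q-\gamma}=C_0(d,d-q+\gamma,u)$. Assembling these limits gives $f_{d,q,\gamma,K}(\delta)\to f_{d,q,\gamma,0}(\delta)$, and matching the resulting formula with \cite[Theorem 5.2]{dkt} identifies the limit as the Euclidean density; in particular it integrates to $1$ and $\mathcal{L}(0)$ has no atom at $+\infty$.

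Next I would control the atom. By Theorem~\ref{density}, $\int_0^\infty f_{d,q,\gamma,K}(\delta)\,\dd\delta=1-a_K\le 1$. Fatou's lemma then gives $1=\int_0^\infty f_{d,q,\gamma,0}\,\dd\delta\le\liminf_{K\uparrow 0}\int_0^\infty f_{d,q,\gamma,K}\,\dd\delta=\liminf_{K\uparrow 0}(1-a_K)$, which together with $1-a_K\le 1$ forces $a_K\to 0$; since $a_K=\mathbb{P}(\mathbf{E}\cap\mathbf{L}=\varnothing)$, this is exactly the ``in particular'' assertion $\lim_{K\uparrow 0}\mathbb{P}(\mathbf{E}\cap\mathbf{L}\neq\varnothing)=1$. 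Having $a_K\to 0$ now upgrades the mass statement to $\int f_{d,q,\gamma,K}\to\int f_{d,q,\gamma,0}$, so by Scheffé's lemma $f_{d,q,\gamma,K}\to f_{d,q,\gamma,0}$ in $L^1((0,\infty))$. Finally, for any bounded continuous $g$ on $[0,+\infty]$ I would write $\int g\,\dd\mathcal{L}(K)=a_K\,g(+\infty)+\int_0^\infty g(\delta)f_{d,q,\gamma,K}(\delta)\,\dd\delta$; the first summand tends to $0$ since $a_K\to 0$, while $\bigl|\int_0^\infty g\,(f_{d,q,\gamma,K}-f_{d,q,\gamma,0})\,\dd\delta\bigr|\le\|g\|_\infty\,\|f_{d,q,\gamma,K}-f_{d,q,\gamma,0}\|_{L^1}\to 0$. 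Hence $\int g\,\dd\mathcal{L}(K)\to\int_0^\infty g\,f_{d,q,\gamma,0}\,\dd\delta=\int g\,\dd\mathcal{L}(0)$, which is the asserted weak convergence.

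The step I expect to be the main obstacle is the passage to the limit in the total mass $\int f_{d,q,\gamma,K}$. A naive dominated-convergence argument is not available: for each $K<0$ the density has exponentially decaying tails (with rate $\sim 2\sqrt{-K}$, coming from the factor $\cosh^{-2}(\sqrt{-K}\delta)$ after the substitution $r=R(\delta)$), but this decay rate degenerates as $K\uparrow 0$, whereas the limiting Euclidean density has only a polynomial tail of order $\delta^{-\gamma-2}$; no single integrable majorant dominates the whole family $\{f_{d,q,\gamma,K}\}$. Replacing domination by the combination of Fatou's lemma (which needs only that the identified limit is a genuine probability density) and Scheffé's lemma is precisely what circumvents this difficulty, and as a by-product delivers the convergence $a_K\to 0$ required for the atom.
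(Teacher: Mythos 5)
Your proposal is correct, but it follows a genuinely different route from the paper's proof. The paper works with distribution functions rather than densities: it proves $\mathbb{P}_K(d_K(o,\mathbf{E}\cap\mathbf{L})\le\delta)\to\mathbb{P}_0(d_0(o,\mathbf{E}\cap\mathbf{L})\le\delta)$ for each fixed $\delta\in(0,\infty)$ by applying dominated convergence twice to the double integral in \eqref{re:density} --- once for the inner $z$-integral (splitting the cases $r\ge u$ and $r<u$) and once for the outer $r$-integral (splitting at $r=R_K(u)$) --- and this requires constructing explicit majorants that are uniform in $K$ near $0$, such as $F_K\le 2^{(d+1)/2}G$ and $M_K\le 2^{(d+1)/2}N$ together with a comparison against a fixed curvature $K'$. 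Note that the tail degeneration you flag as the ``main obstacle'' never arises in the paper's argument, because for fixed $\delta$ the outer integration runs only over the bounded range $(0,R_K(\delta))\subset(0,\delta)$; the cost is instead the majorant bookkeeping. Your route keeps only the easy part of the hard analysis --- pointwise convergence of the densities, where for fixed $\delta$ the bound $(1+KR(\delta)^2z^2)^{-\frac{d+1}{2}}\le\cosh^{d+1}(\sqrt{-K}\delta)$ makes the inner dominated convergence immediate, and the convergence $C_K(d,d-q+\gamma,u)\to C_0(d,d-q+\gamma,u)$ follows from \eqref{normconstant} exactly as in the paper --- and then replaces every uniform-in-$K$ domination over the $\delta$-variable by soft measure theory: Fatou together with the a priori identity $\int_0^\infty f_{d,q,\gamma,K}=1-a_K\le 1$ forces $a_K\to 0$, Scheff\'e upgrades pointwise convergence plus mass convergence to $L^1$ convergence, and weak convergence on the compactified half-line $[0,+\infty]$ follows by testing against bounded continuous functions. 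Both arguments rest on the same external input, namely that the Euclidean law from \cite[Theorem 5.2]{dkt} is a proper probability measure on $(0,\infty)$ with total mass $1$; without this your Fatou step, and equally the paper's passage from pointwise CDF convergence to weak convergence and to the ``in particular'' claim, would collapse. What each approach buys: yours is shorter, avoids all explicit majorant constructions, and delivers the stronger conclusion of $L^1$ convergence of the densities; the paper's is more self-contained in its estimates and yields convergence of the distribution functions directly at every point.
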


Finally, we derive a major consequence of Theorem \ref{intersection probability} by uncovering a phase transition in the study of intersection probabilities in hyperbolic spaces. Thereby we obtain a critical phase which we fully characterize. In view of Corollary \ref{cor:IntersectionProbabdtoInfinity} and Theorem \ref{curvaturezero} one may wonder what is the behavior if simultaneously the curvature tends to zero and the dimension tends to infinity. The next statement addresses this question and illustrates that there are three phases including a critical one.

\begin{theorem}\label{phase transition}
Assume that $K = K(d) < 0$ for every $d \geq 2$. The following holds:
\begin{itemize}
	 \item[(i)]\label{subcritical} If $\lim\limits_{d\to \infty} -K(d) d = 0$ then 
	 $$
\lim_{d\to \infty}\mathbb{P} ( \mathbf{E} \cap \mathbf{L}\neq \varnothing ) = 1.
$$
	 \item[(ii)] \label{supercritical} If $\lim\limits_{d\to \infty}-K(d) d = \infty$ then 
	 $$
\lim_{d\to \infty}\mathbb{P} ( \mathbf{E} \cap \mathbf{L}\neq \varnothing ) = 0.
$$
	 \item[(iii)] \label{critical} If $\lim\limits_{d\to \infty}-K(d) d = \kappa \in (0, \infty)$ then 
	 $$
\lim_{d\to \infty}\mathbb{P} ( \mathbf{E} \cap \mathbf{L}\neq \varnothing ) =  \rho(u,q,\gamma, \kappa) \in (0, 1),
$$
where the constant $\rho(u,q,\gamma, \kappa) $ is given by
\begin{align*}
\rho(u,q,\gamma, \kappa) & = 
\omega_{\gamma+1}   (2\pi )^{-\frac{\gamma+1}{2}}  u^{1+q}{\kappa}^{\gamma+1\over 2}  \Big( \int_0^u e^{\frac{\kappa}{2}s^2} s^{q-\gamma-1} \,\dd s \Big)^{-1} \\
& \quad  \times\int^1_{{0}}   {r^{q-\gamma-1}} 
\int_{0}^{ \frac{1}{r}} 
{v^q}  \exp \Big( - \frac12 u^2 \kappa v^2 (1-r^2) \Big)  \, \dd v
\dd r.\\
\end{align*}
\end{itemize}
\end{theorem}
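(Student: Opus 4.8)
The plan is to start from the closed form in Theorem \ref{intersection probability} and reduce the entire expression to the analysis of a single scalar, the product $-K(d)\,d$. First I would substitute the explicit values of $C_K(d,d-q+\gamma,u)$ from \eqref{normconstant} and of $D(d,q,\gamma)$ and cancel the common factor $\omega_{q-\gamma}$, and record the Stirling asymptotics $\omega_{d-q}/\omega_{d-q+\gamma+1}=\pi^{-(\gamma+1)/2}\Gamma(\tfrac{d-q+\gamma+1}{2})/\Gamma(\tfrac{d-q}{2})\sim(2\pi)^{-(\gamma+1)/2}d^{(\gamma+1)/2}$ as $d\to\infty$. Then I would perform the two substitutions $r=y/\sqrt{-K}$ and $z=\sqrt{-K}\,u\,v$, rescaling both variables to order-one ranges, which turns the probability into the exact identity
\[
\mathbb{P}(\mathbf{E}\cap\mathbf{L}\neq\varnothing)=\frac{\omega_{\gamma+1}\,\omega_{d-q}}{\omega_{d-q+\gamma+1}}\cdot\frac{(-K)^{q/2}u^{q+1}}{\mathrm{Den}_d}\,J_d,
\]
where $\mathrm{Den}_d=\int_0^u\cosh^{d-q+\gamma}(\sqrt{-K}s)\sinh^{q-\gamma-1}(\sqrt{-K}s)\,\dd s$ and $J_d=\int_0^1 y^{q-\gamma-1}\int_0^{v_{\max}(y)}v^q(1-(-K)u^2v^2)^{\frac{d-q}{2}-1}(1-y^2(-K)u^2v^2)^{-\frac{d+1}{2}}\,\dd v\,\dd y$ with $v_{\max}(y)=(\sqrt{-K}u)^{-1}\wedge R(u)/(yu)$.

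In cases (i) and (iii) one necessarily has $K(d)\to0$, so I would linearise: $R(u)\to u$, $\sinh(\sqrt{-K}s)\sim\sqrt{-K}\,s$, $\cosh^{d-q+\gamma}(\sqrt{-K}s)\to e^{\kappa s^2/2}$, and inside $J_d$ the two power factors combine to $v^q\exp(-b_d v^2(1-y^2))$ with $b_d:=\tfrac12 u^2(-K(d)d)\to\tfrac12 u^2\kappa$, while $v_{\max}(y)\to1/y$. This isolates the master function $\Phi(b)=\int_0^1 y^{q-\gamma-1}\int_0^{1/y}v^q e^{-bv^2(1-y^2)}\,\dd v\,\dd y$. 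For the critical case (iii) I would justify interchanging limit and integral by exhibiting an integrable dominating function on $(0,1)\times(0,\infty)$, giving $J_d\to\Phi(\tfrac12 u^2\kappa)$ together with $\mathrm{Den}_d\sim(-K)^{(q-\gamma-1)/2}\int_0^u e^{\kappa s^2/2}s^{q-\gamma-1}\dd s$; assembling the pieces, the powers $d^{(\gamma+1)/2}(-K)^{(\gamma+1)/2}=(-Kd)^{(\gamma+1)/2}\to\kappa^{(\gamma+1)/2}$ combine with the remaining constants to reproduce exactly $\rho(u,q,\gamma,\kappa)$, which lies in $(0,1)$ because $\Phi$ is finite and strictly positive.

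The boundary cases form the main obstacle. In case (i) one has $b_d\to0$ and $\Phi(0)=+\infty$: the mass of $J_d$ escapes to $y\to0$ (equivalently $v\to\infty$), so dominated convergence is unavailable. Instead I would rescale $y=\sqrt{b_d}\,t$ (and $v=w/\sqrt{b_d}$) to prove $J_d\sim M\,b_d^{-(\gamma+1)/2}$ with $M=\int_0^\infty t^{q-\gamma-1}\int_0^{1/t}w^q e^{-w^2}\,\dd w\,\dd t$, which by Fubini collapses to $\tfrac{1}{q-\gamma}\int_0^\infty w^\gamma e^{-w^2}\dd w=\Gamma(\tfrac{\gamma+1}{2})/(2(q-\gamma))$. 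Combined with $\mathrm{Den}_d\sim(-K)^{(q-\gamma-1)/2}u^{q-\gamma}/(q-\gamma)$, every power of $-Kd$, of $-K$ and of $u$ cancels, and the identity $\omega_{\gamma+1}\pi^{-(\gamma+1)/2}=2/\Gamma(\tfrac{\gamma+1}{2})$ makes the remaining constants collapse to exactly $1$. It is precisely the exact value of $M$ that returns the Euclidean limit $1$ rather than some other constant, so this step must be carried out carefully.

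In case (ii) one has $b_d\to\infty$. If $K(d)\to0$ I would use a Laplace estimate showing that $\mathrm{Den}_d$ is governed by its endpoint $s=u$, namely $\mathrm{Den}_d\sim(-K)^{(q-\gamma-1)/2}u^{q-\gamma-2}(-Kd)^{-1}e^{(-Kd)u^2/2}$, while $J_d\sim\Phi(b_d)$ stays bounded (indeed tends to $0$); then the factor $e^{-(-Kd)u^2/2}$ appearing in $u^{q+1}/\mathrm{Den}_d$ beats the polynomial prefactor $(-Kd)^{(\gamma+3)/2}$, forcing $\mathbb{P}(\mathbf{E}\cap\mathbf{L}\neq\varnothing)\to0$. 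If instead $\limsup_{d}(-K(d))>0$, I would pass to a subsequence with $-K\ge k_0>0$ and conclude by the same exponential volume growth already underlying Corollary \ref{cor:IntersectionProbabdtoInfinity}. The technical crux throughout is uniformity: producing one dominating function valid for all large $d$ in case (iii), and controlling the boundary layers in (i) and (ii); in particular the sharp Laplace asymptotics of $\mathrm{Den}_d$ at its endpoint are essential, since the crude bound $(1-y^2(-K)u^2v^2)^{-\frac{d+1}{2}}\le\cosh^{d+1}(\sqrt{-K}u)$ is far too lossy to yield the decay in (ii).
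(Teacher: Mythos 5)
Your proposal is correct, and your algebra checks out at every point I verified: the exact identity $\mathbb{P}(\mathbf{E}\cap\mathbf{L}\neq\varnothing)=\frac{\omega_{\gamma+1}\omega_{d-q}}{\omega_{d-q+\gamma+1}}(-K)^{q/2}u^{q+1}J_d/\mathrm{Den}_d$, the Fubini evaluation $M=\Gamma(\tfrac{\gamma+1}{2})/(2(q-\gamma))$, the collapse of all constants to $1$ in phase (i), and the emergence of exactly $\rho(u,q,\gamma,\kappa)$ in phase (iii). While you share the inevitable skeleton with the paper (start from Theorem \ref{intersection probability}, Stirling asymptotics, rescaling both variables to order-one ranges, separate asymptotics of the normalizing constant), the two boundary phases are handled by genuinely different tactics. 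In phase (i) the paper never computes the exact asymptotics of the double integral: it discards the range $r<R_K(u)$, derives a \emph{lower} bound via the substitutions $v=rz$, $s=1/r$, Fubini and a cited Laplace-method theorem of Wong, and concludes from $\liminf_d \mathbb{P}\geq 1$ together with the free upper bound $\mathbb{P}\leq 1$. You instead prove two-sided asymptotics $J_d\sim M\,b_d^{-(\gamma+1)/2}$ by the self-similar rescaling $y=\sqrt{b_d}\,t$, $v=w/\sqrt{b_d}$; this costs you a dominating function (available, e.g., from $(1-x)^n\leq e^{-nx}$ together with the bound $(1-y^2(-K)u^2v^2)^{-\frac{d+1}{2}}\leq\cosh^{d+1}(\sqrt{-K}u)\to 1$ when $-Kd\to 0$), but is self-contained and sharper than the paper's one-sided argument. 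In phase (ii) the paper splits the integral at $r=R(\sqrt{-K}u)$ and shows both pieces are $O(d^{-(q+1)/2})$, so that already the polynomial factor $(-Kd)^{-(q-\gamma)/2}$ forces the limit $0$; your route via endpoint Laplace asymptotics of $\mathrm{Den}_d$ works equally well, though note that boundedness of $J_d$ is all you need and all you can cheaply get (the asserted equivalence $J_d\sim\Phi(b_d)$ requires the same uniform control you flag as the crux; boundedness follows from the convexity inequality $-\log(1-y^2x)\leq -y^2\log(1-x)$ away from an $O(1/d)$-window at $y=1$). You also explicitly treat the sub-case $\limsup_d(-K(d))>0$, which the paper silently excludes: its Taylor expansions of $\cosh$ and $\sinh$ at $0$ presume $K(d)\to 0$, so your subsequence reduction is a genuine (small) improvement, provided the bounds are made uniform in $-K\geq k_0$. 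Phase (iii) is essentially identical in both proofs. One shared blemish: neither you nor the paper proves the \emph{strict} inequality $\rho<1$; your parenthetical justification only yields $\rho\in(0,\infty)$, with $\rho\leq 1$ coming from $\rho$ being a limit of probabilities.
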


\begin{figure}[t]
    \centering
    \includegraphics[width=0.8\textwidth]{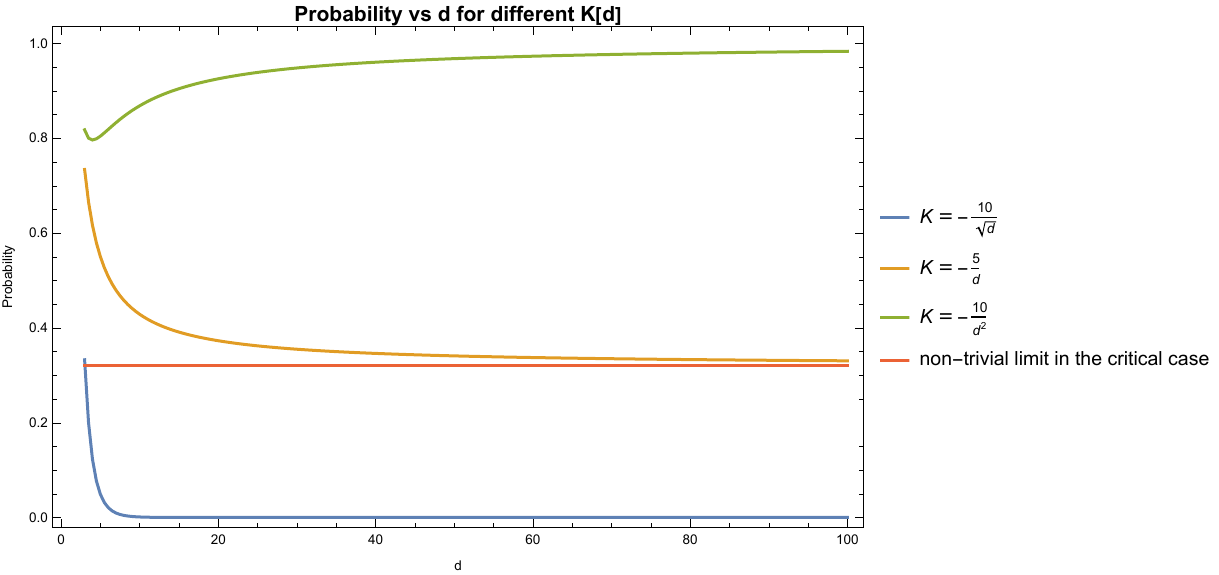}
    \caption{Emergence of three phases for the probability $\mathbb{P} ( \mathbf{E} \cap \mathbf{L} \neq \varnothing )$ if the curvature $K$ tends to zero and the dimension $d$ tends to infinity simultaneously.} 
    \label{fig:phases}
\end{figure}

Figure \ref{fig:phases} visually represents examples of each distinct phase described in Theorem \ref{phase transition}, illustrating how the intersection probability behaves under different conditions on curvature and dimension, alongside the non-trivial constant observed in the critical phase.

\section{The space of constant curvature $K<0$}\label{sec:SpaceK<0}

In this section we provide some preliminary results about the geometry of the space of constant curvature $K<0$. These results are needed for the proofs of our main results and extend those of \cite[Section 5]{bht} from the special case $K=-1$ to general curvature parameters $K$. 

For two points $x=(x_1,\ldots,x_{d+1})$ and $y=(y_1,\ldots,y_{d+1})$ in $\mathbb{R}^{d+1}$ we will use the notation $x\circ y:=x_1y_1+\ldots +x_dy_d-x_{d+1}y_{d+1}$ and $\Vert x \Vert_{L} := x_1^2+\ldots+x_d^2-x_{d+1}^2$ for the Lorentzian inner product and norm, respectively. Moreover, for $x,y\in\RR^d$ with $x=(x_1,\ldots,x_d)$ and $y=(y_1,\ldots,y_d)$ we set $\langle x,y\rangle:= x_1y_1+\ldots+x_dy_d$ and $\| x \| := x_1^2+\ldots +x_d^2$, which are the Euclidean inner product and norm.
A model for the space of constant sectional curvature $K<0$ is provided by one scale of the hyperboloid $\MM_{K}^d=\lbrace x \in \mathbb{R}^{d+1}: \Vert x\Vert_{L} = \frac{1}{K}\rbrace$ endowed with a metric $d_{K}(x,y)$ given by
$$
 x\circ y = K^{-1}\cosh (\sqrt{-K} d_{K}(x,y)),\qquad x,y\in \MM_{K}^d.
 $$
However, it turns out to be more convenient for us to work with the Beltrami-Klein or projective model for $\MM_{K}^d$. For that purpose, let $\mathbb{D}^d_{K}=\lbrace x\in\mathbb{R}^{d+1}: \| x\|^2<-1/K\text{ and } x_{d+1}=0\rbrace$ be the $d$-dimensional open ball of radius $-1/K$, which we consider as being embedded into the hyperplane $\{x=(x_1,\ldots,x_{d+1})\in\RR^{d+1}:x_{d+1}=0\}$ in $\RR^{d+1}$. There is a natural bijection from $\mathbb{D}^d_{K}$ to $\MM_{K}^d$ as defined above. To describe it, let $x=(x_1,\ldots,x_{d+1})\in\mathbb{D}^d_{K}$ and draw a line segment from the origin of $\mathbb{R}^{d+1}$ to $\MM_{-K}^{d}$ that intersects the plane $\Pi=\lbrace (y_1,\ldots,y_d,1/\sqrt{-K}):y_i\in \mathbb{R}\rbrace\subset\RR^{d+1}$ at the point $(x_1,\ldots,x_d,1/\sqrt{-K})$. The line $\epsilon_x=\lbrace (\lambda x_1,\ldots,\lambda x_d,\lambda/\sqrt{-K}):\lambda\in\mathbb{R}\rbrace$ intersects $\MM_{K}^{d}$ at the point that satisfies $\lambda^2(\| x \|^2 + \frac{1}{K})=\frac{1}{K}$, and thus $\lambda=1/(1+K\| x\|^2)^{\frac{1}{2}}$. Therefore, the bijection $p : \mathbb{D}_{K}^{d} \to \MM_{K}^{d}$ is described as follows:
\begin{equation}\label{eq:ProjectionP}
p(x)=\left(\frac{x_1}{(1+K\| x\|^2)^{\frac{1}{2}}},\ldots,\frac{x_d}{(1+K\| x\|^2)^{\frac{1}{2}}}, \frac{1}{\sqrt{-K} (1+K\| x\|^2)^{\frac{1}{2}}}\right)=\frac{x+\frac{e_{d+1}}{\sqrt{-K}}}{(1+K\| x\|^2)^{\frac{1}{2}}}
\end{equation}
in which $e_{d+1}=(0,\ldots,0,1)\in\RR^{d+1}$ is the $(d+1)$-st vector from the standard orthonormal basis in $\RR^{d+1}$.
Note that $(1+K\| x\|^2)^{\frac{1}{2}}=\sqrt{-K}\vert \Vert x+\frac{e_{d+1}}{\sqrt{-K}}\Vert_{L} \vert$ in terms of the Lorentzian norm.
The inverse function of $p$ is
$$
p^{-1}(x)=\left(\frac{x_1}{\sqrt{-K} x_{d+1}},\ldots,\frac{x_d}{\sqrt{-K}x_{d+1}},0\right).
$$

\begin{figure}[t]
  \centering
  \begin{subfigure}[b]{0.35\textwidth}
    \includegraphics[width=\textwidth]{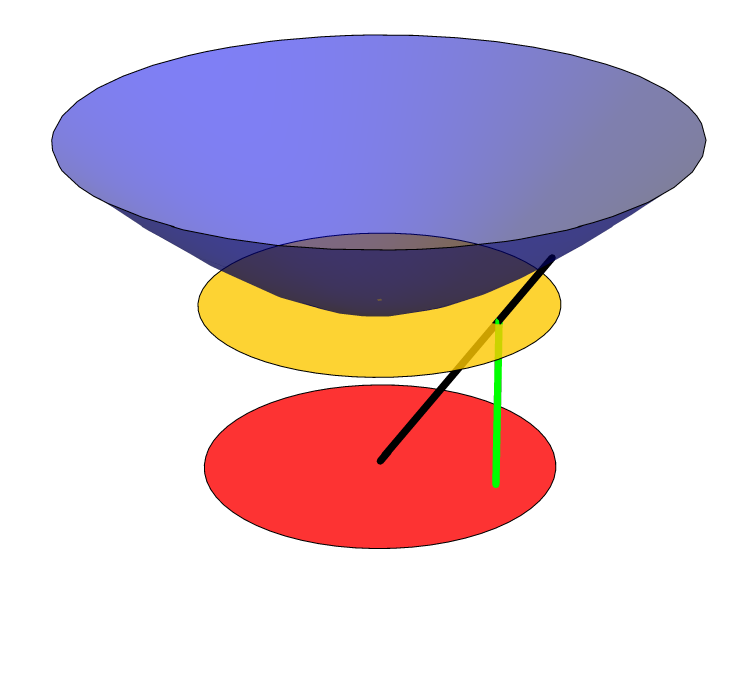}
    \label{fig:BeltramiKlein}
  \end{subfigure}
  \hfill
  \begin{subfigure}[b]{0.4\textwidth}
    \includegraphics[width=\textwidth]{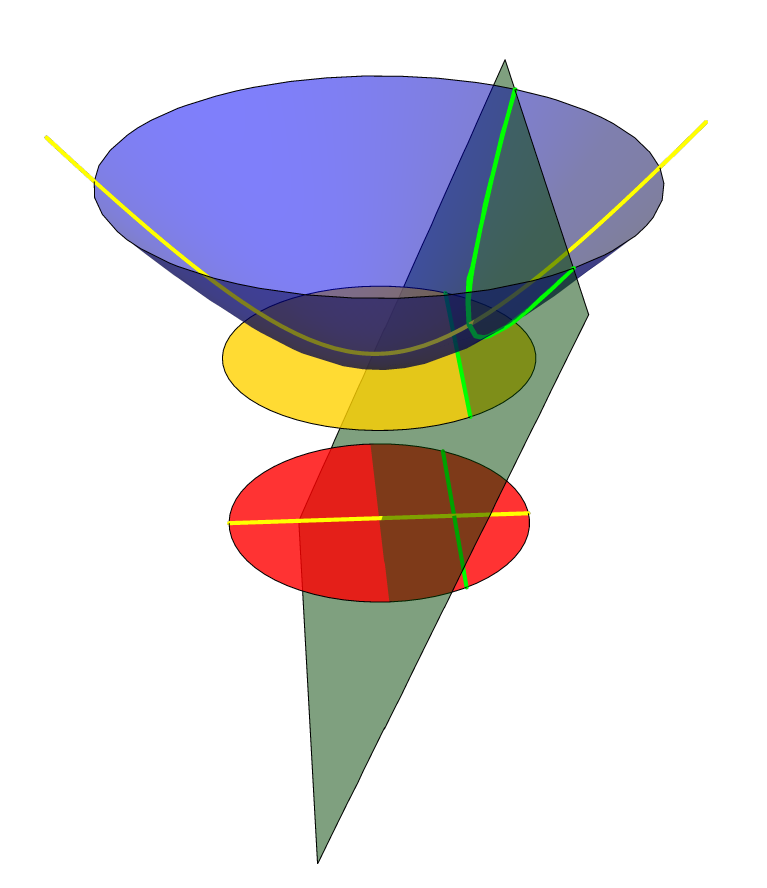}
  \end{subfigure}
  \caption{The blue surface illustrates the hyperboloid $\mathbb{M}^d_K$ in $\mathbb{R}^{d+1}$ and the red disk illustrates the projection onto $\mathbb{D}^d_K$. On the left image there is an example of the projection $p$. On the right image we illustrate how flats appear, the green curves are a flat in generic position $\mathbb{M}^d_K$ and its projection to $\mathbb{D}^d_K$, the yellow curves are a flat and its projection that pass through the origin of each model.}
\end{figure}

For two points $x,y\in\mathbb{D}^d_{K}$ we define their distance as $d_{K}(p(x),p(y))$ and continue to denote this by $d_K(x,y)$, slightly abusing notation. Then, by definition of the metric $d_K$, we obtain 
\begin{align}
\cosh(\sqrt{-K} d_K(x,y))&= \cosh(\sqrt{-K} d_{K}(p(x),p(y))\nonumber \\
&=Kp(x)\circ p(y)\nonumber \\
&=K \frac{x+\frac{e_{d+1}}{\sqrt{-K}}}{(1+K\| x\|^2)^{\frac{1}{2}}}\circ \frac{y+\frac{e_{d+1}}{\sqrt{-K}}}{(1+K\| y\|^2)^{\frac{1}{2}}}\nonumber \\
&=\frac{1+K\langle x,y\rangle }{(1+K\| x\|^2)^{\frac{1}{2}}(1+K\| y\|^2)^{\frac{1}{2}}}. \label{eq:cosh}
\end{align}
Following the structure of Theorem 6.1.5 in \cite{r}, we will determine now the element of arc length for the model $\mathbb{D}^d_{K}$ of $\MM_{K}^d$. Let $y:= p(x)$ and denote $\Vert \mathrm{d}y \Vert_{L}= ( \dd y_1^2 + \ldots +\dd y_d^2 - \dd y_{d+1}^2)^{\frac{1}{2}}$. We calculate the differentials $\dd y_i$ for $i\in\{1,\ldots,d\}$ and separately $\dd y_{d+1}$:
\begin{align*}
\dd y_i&=\sum_{j=1}^d \frac{\partial y_i}{\partial x_j}\dd x_j
\\
&=\frac{1}{(1+K\| x\|^2)^{\frac{1}{2}}}\dd x_i +
\sum_{j=1}^d \frac{x_i(-1/2)2Kx_j \dd x_j}{(1+K\| x\|^2)^{\frac{1}{2}}}\\
&=\frac{\dd x_i}{(1+K\| x\|^2)^{\frac{1}{2}}} -
  \frac{x_i K \langle x,\dd x\rangle }{(1+K\| x\|^2)^{\frac{3}{2}}},\\
(\dd y_i)^2&=
\frac{\dd x_i^2}{1+K\| x\|^2}+\frac{K^2x_i^2\langle x,\dd x\rangle^2}{(1+K\| x \|^2)^3}-\frac{2K x_i\dd x_i \langle x,\dd x\rangle}{(1+K\| x\|^2)^2}\\
\intertext{and for the last coordinate we obtain}
\dd y_{d+1}&=\frac{1}{\sqrt{-K}} \sum_{i=1}^d \frac{\partial}{\partial x_i}
(1+K\| x\|^2)^{-\frac{1}{2}}
\\
&=\frac{1}{\sqrt{-K}}\sum_{i=1}^d\frac{(-1/2)2Kx_i\dd x_i}{(1+K\| x\|^2)^{\frac{3}{2}}}\\
&=\frac{\sqrt{-K}\langle x,\dd x\rangle}{(1+K\| x\|^2)^{\frac{3}{2}}},\\
(\dd y_{d+1})^2&=-\frac{K\langle x, \dd x\rangle^2}{(1+K\| x\|^2)^3}.
\end{align*}
We conclude that the element of arc length is given by
\begin{align*}
&\sum_{i=1}^d \dd y_i^2 - \dd y_{d+1}^2\\
&=\frac{\| \dd x\|^2}{1+K\| x\|^2}+\frac{K^2\| x\|^2\langle x,\dd x\rangle^2}{(1+K\| x \|^2)^3}-\frac{2K  \langle x,\dd x\rangle^2}{(1+K\| x\|^2)^{3}}-\frac{2K^2\| x\|^2\langle x,\dd x\rangle^2- K\langle x, \dd x\rangle^2}{(1+K\| x\|^2)^3}\\
&=\frac{\| \dd x\|^2}{1+K\| x \|^2}-\frac{1}{(1+K\| x\|^2)^3}(K(1+K\| x\|^2)\langle x,\dd x\rangle^2)\\
&=\frac{\| \dd x\|^2}{1+K\| x\|^2}-\frac{K\langle x,\dd x\rangle^2}{(1+K\| x\|^2)^2}.
\end{align*}
From this we see that the Riemannian metric at $x\in\mathbb{D}_K^{d}$ equals 
$$
g_x(u,v)=\frac{1}{(1+K\| x\|^2)^2}\left( (1+K\| x\|^2) \langle u,v\rangle-K\langle x,u\rangle\langle x , v\rangle\right),
$$
where $u$ and $v$ are two vectors in the tangent space of $\mathbb{D}_K^{d}$ at $x$, which in turn can be identified with $\RR^d$.

We will use the element of arc length for the projective model $\mathbb{D}_K^d$ of the hyperbolic space to translate the $q$-dimensional Hausdorff measure restricted to an element $E\in A_K(d,q)$ in the projective model to the intrinsic Euclidean $q$-dimensional Hausdorff measure of $E\cap\mathbb{D}_K^d$ when $E$ is regarded as an element of $A_0(d,q)$. We thereby follow the steps of Lemma 5.1 in \cite{bht}, which deals with the special case $K=-1$. Let $E\in A_0(d,q)$ be a $q$-dimensional affine subspace of $\RR^d$ having non-empty intersection with $\mathbb{D}_{K}^d$ and let $L$ be the unique linear subspace of $\mathbb{R}^{d}$ such that $E=L+\tau(E)$, where $\tau(E)$ denotes the orthogonal projection of the origin onto $E$. Let $u_1,\ldots,u_q$ be an orthonormal basis for $L$.
The Gramm matrix for the Riemannian metric of the basis is calculated in the same manner as in \cite{bht}. Namely, if $I_q$ is the $(q\times q)$-identity matrix, we have
\begin{align}
G(x)&=\frac{1}{(1+K\| x\|^2)^{2q}}
\det \left( (1+K\| x\|^2) I_q -K(\langle x,u_i\rangle\langle x,u_j\rangle)_{i,j=1}^q \right)\nonumber \\
&=\frac{1}{(1+K\| x\|^2)^{2q}}(1+K\| x\|^2)^{q-1}\left(1+K\|x\|^2-\sum_{i=1}^q K\langle x , u_i\rangle^2\right)\nonumber \\
&=\frac{(1+K\| \tau (E)\|^2)}{(1+K\|x\|^2)^{q+1}}.\nonumber
\end{align} 
Now, consider the linear map $\psi :\mathbb{R}^q\to \mathbb{R}^q$ given by $\psi(z)=1+K\|x\|^2+\sum_{i=1}^q\langle a,z\rangle a$, where $a=(\sqrt{-K}\langle x,u_1\rangle,\ldots,\sqrt{-K}\langle x,u_q\rangle)$. It has the eigenvalue $(1+K\|x\|^2)$ with multiplicity $q-1$ and the eigenvalue $1+K\|x\|^2-\sum_{i=1}^q K\langle x , u_i\rangle^2$ with multiplicity one. Applying now Theorem 3.2.3 in \cite{federergmt} we conclude that the restriction to $E$ of the $q$-dimensional Hausdorff measure $\mathcal{H}_K^q$ with respect to the hyperbolic structure with curvature $K$ is given by 
\begin{equation}\label{eq:Haus}
({\mathcal{H}_{K}^q} \raisebox{0.5ex}{$\llcorner$} E)(\,\cdot\,)=\int_{E} \mathbf{1}{\lbrace x\in \,\cdot\, \rbrace}\frac{(1+K\| \tau (E)\|^2)}{(1+K\|x\|^2)^{q+1}}\,\mathcal{H}_0^q(\dd x),
\end{equation}
where $\mathcal{H}_0^q$ stands for the $q$-dimensional Hausdorff measure with respect to the Euclidean structure corresponding to $E$. 

Our next step will be to express the invariant measure $\mu_{q,K}$ on $A_K(d,q)$ in Euclidean terms. It is an extension of Lemma 5.3 in \cite{bht} to general curvatures $K$.
 
\begin{lemma}\label{lem:subst}
Let $q\in \lbrace0,\ldots, d-1\rbrace$. Then 
\begin{align*}
\mu_{q,K}(\,\cdot\,) &=\int_{A_0(d,q)} \mathbf{1}\Big\{ E\cap B^d_0\Big(\frac{1}{\sqrt{-K}}\Big)\in \,\cdot\, \Big\}(1+K\|\tau(E)\|^2)^{-\frac{d+1}{2}}\,\mu_{q,0}(\dd E)\\
&=\int_{G_0(d,d-q)}\int_{L\cap B^d_0\left(\frac{1}{\sqrt{-K}}\right)}\mathbf{1}\Big\{ (L^\perp + x)\cap B^d_0\Big(\frac{1}{\sqrt{-K}}\Big) \in \,\cdot\,\Big\}\\
&\hspace{4cm}\times (1+K\|x\|^2)^{-\frac{d+1}{2}} \,\mathcal{H}_0^{d-q}(\dd x)\nu_{d-q,0}(\dd L).
\end{align*}
\end{lemma}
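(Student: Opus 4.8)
The plan is to pass to the Beltrami--Klein model, where $\MM_K^d$ is realised as the open Euclidean ball $\mathbb{D}^d_K=B^d_0(1/\sqrt{-K})$, and to translate each of the four ingredients of the defining formula \eqref{eq:MuqK} for $\mu_{q,K}$ into Euclidean terms. Recall that \eqref{eq:MuqK} integrates the indicator of the orthogonal flat $E(L,x)$ against the weight $\cosh^q(\sqrt{-K}\,d_K(x,o))$, with respect to $\mathcal{H}_K^{d-q}$ on $L$ and $\nu_{d-q,K}$ on $G_K(d,d-q)$. First I would rewrite the measure on the Grassmannian, then the $\cosh$-weight, then the Hausdorff measure, and finally identify $E(L,x)$, before collecting the resulting densities.

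For the Grassmannian, the origin-fixing isometries of $\MM_K^d$ act in the Beltrami--Klein model precisely as the orthogonal group on $\RR^d$, so the identification $L\mapsto L\cap\mathbb{D}^d_K$ carries $\nu_{d-q,K}$ onto the Euclidean invariant probability measure $\nu_{d-q,0}$ on $G_0(d,d-q)$; this follows at once from uniqueness of the Haar measure. For the weight, evaluating \eqref{eq:cosh} with $y=o$, which corresponds to the centre $0$ of the ball, gives $\cosh(\sqrt{-K}\,d_K(x,o))=(1+K\|x\|^2)^{-1/2}$, hence $\cosh^q(\sqrt{-K}\,d_K(x,o))=(1+K\|x\|^2)^{-q/2}$. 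For the Hausdorff measure I would apply the area formula \eqref{eq:Haus} to the $(d-q)$-flat $L$: since $L$ passes through the origin we have $\tau(L)=0$, and the corresponding area element equals $(1+K\|x\|^2)^{-(d-q+1)/2}$, so that the restriction of $\mathcal{H}_K^{d-q}$ to $L$ becomes $(1+K\|x\|^2)^{-(d-q+1)/2}\,\mathcal{H}_0^{d-q}(\dd x)$, with $x$ ranging over $L\cap B^d_0(1/\sqrt{-K})$ because $x$ must lie inside the model.

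The step I expect to be the crux is the identification of the orthogonal flat $E(L,x)$. Here I would show that, because $L$ is linear, the $g_x$-orthogonal complement of the tangent space $T_xL=L$ coincides with the Euclidean orthogonal complement $L^\perp$. Indeed, for $u\in L$ and $v\in L^\perp$ one has $\langle u,v\rangle=0$ and, since $x\in L$, also $\langle x,v\rangle=0$; substituting into the explicit Riemannian metric $g_x(u,v)=(1+K\|x\|^2)^{-2}\big((1+K\|x\|^2)\langle u,v\rangle-K\langle x,u\rangle\langle x,v\rangle\big)$ gives $g_x(u,v)=0$. As $L^\perp$ has the complementary dimension $q$, it follows that $E(L,x)=(L^\perp+x)\cap B^d_0(1/\sqrt{-K})$, and since $x\perp L^\perp$ a one-line computation yields $\tau(L^\perp+x)=x$. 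This is exactly where the hypothesis that $L$ passes through the origin enters; for a flat in generic position hyperbolic and Euclidean orthogonality disagree, and the argument would break down.

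Collecting the density factors produces $(1+K\|x\|^2)^{-q/2}\cdot(1+K\|x\|^2)^{-(d-q+1)/2}=(1+K\|x\|^2)^{-(d+1)/2}$, which is precisely the weight in the statement, and together with the three substitutions above this gives the second displayed line of the lemma. Finally I would deduce the first line from the second by inserting the Euclidean definition \eqref{eq:MuqEuclidean} of $\mu_{q,0}$ and using that the map $L\mapsto L^\perp$ transports $\nu_{d-q,0}$ to $\nu_{q,0}$ while sending $\mathcal{H}_0^{d-q}$ on $L$ to Lebesgue measure on $L^\perp$; for the resulting flat $E=L^\perp+x$ one has $\tau(E)=x$, so the weight $(1+K\|x\|^2)^{-(d+1)/2}$ turns into $(1+K\|\tau(E)\|^2)^{-(d+1)/2}$. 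The only remaining points needing care are the domain constraint $\|x\|<1/\sqrt{-K}$, equivalently $E\cap B^d_0(1/\sqrt{-K})\neq\varnothing$, which keeps every density well defined and finite, and the careful bookkeeping of the exponents in the area formula.
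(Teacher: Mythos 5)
Your proof is correct and follows essentially the same route as the paper: substitute \eqref{eq:cosh} and the Klein-model area element into the defining formula \eqref{eq:MuqK}, identify $E(L,x)=(L^\perp+x)\cap B^d_0(1/\sqrt{-K})$, and pass to the first displayed line via \eqref{eq:MuqEuclidean} and the duality $L\mapsto L^\perp$. If anything, you are more careful than the paper on two points: you verify explicitly that the $g_x$-orthogonal complement of a linear $L$ at $x\in L$ coincides with the Euclidean one, and you correctly take the \emph{square root} of the Gram determinant as the area element, i.e. $(1+K\|x\|^2)^{-(d-q+1)/2}$, which is exactly what the exponent $-\tfrac{d+1}{2}$ in the statement requires (the paper's displayed formula \eqref{eq:Haus} omits this square root, apparently a typo, though its own proof of the lemma uses the halved exponents just as you do).
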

\begin{proof}
Recall the representation \eqref{eq:MuqK} of the invariant measure $\mu_{q,K}$. We substitute $\cosh(\sqrt{-K}d(x,o))$ according to \eqref{eq:cosh} and the Hausdorff measure according to the relation given by \eqref{eq:Haus}. This yields
\begin{align*}
\mu_{q,K}(\,\cdot\,)&=\int_{G_0(d,d-p)}\int_{L\cap B^d_0\left(\frac{1}{\sqrt{-K}}\right)}\mathbf{1}\Big\{(L^\perp + x)\cap B^d_0\Big(\frac{1}{\sqrt{-K}}\Big) \in \,\cdot\,\Big\} (1+K\|x\|^2)^{-\frac{q}{2}}\\
&\hspace{3cm}\times \frac{1}{(1+K\|x\|^2)^{\frac{d-q+1}{2}}}(1+K\|\tau(L^\perp+x)\|^2)\,\mathcal{H}_0^{d-q}(\dd x)\nu_{d-q,0}(\dd L)\\
&=\int_{G_0(d,d-q)}\int_{L^\perp\cap B^d_0\Big(\frac{1}{\sqrt{-K}}\Big)}\mathbf{1}\Big\{ (L^\perp+ x)\cap B^d_0\Big(\frac{1}{\sqrt{-K}}\Big) \in \,\cdot\,\Big\}\\
&\hspace{6cm}\times (1+K\|x\|^2)^{-\frac{d+1}{2}}\,
\mathcal{H}_0^{d-q}(\dd x)\nu_{q,0}(\dd L)\\
&=\int_{A_0(d,q)} \mathbf{1}\Big\{ E\cap B^d_0\Big(\frac{1}{\sqrt{-K}}\Big)\in \,\cdot\,\Big\}(1+K\|\tau(E)\|^2)^{-\frac{d+1}{2}}\,\mu_{k,0}(\dd E).
\end{align*}
The proof is thus complete.
\end{proof}

\section{Intersection probability}

In this section we determine the intersection probability of a $(d-q+\gamma)$-dimensional totally geodesic submanifold in $\MM^d_{K}$ with a $q$-dimensional totally geodesic submanifold that passes through the origin $o$ in $\MM^d_{K}$ as considered in Theorem \ref{intersection probability}. In view of what has been developed in the previous section, it is useful for us to work in the projective model $\mathbb{D}_K^d$ for $\MM_K^d$. This allows us to treat a ball of hyperbolic radius $u>0$ as a subset of $\mathbb{R}^d$ and we will now express $u$ in terms of the corresponding Euclidean radius which we denote by $R(u)$. As we shall show, this notation is justified by the fact that the Euclidean radius is precisely given by taking $x=u$ in \eqref{eq:DefRu}. To see this, let $y$ be a point with hyperbolic distance $u$ to the origin $o$. Then $u=d_K(o,y)=d_{K}(p(o),p(y))$, where $p:\mathbb{D}_K^d\to\mathbb{M}_K^d$ is the projection map from \eqref{eq:ProjectionP}. We substitute Equation \eqref{eq:cosh} and obtain $d_{K}(p(o),p(y))=\frac{1}{\sqrt{-K}}\mathrm{arcosh}\left(Kp(o)\circ p(y)\right)=\frac{1}{\sqrt{-K}}\mathrm{arcosh}\frac{1}{\sqrt{1+K\| y\|^2}} $. As we mentioned, $\|y\|^2=R^2$ and thus
$$
\sqrt{-K} u=\mathrm{arcosh}\frac{1}{\sqrt{1+KR(u)^2}}.
$$
Using the identity $\sinh( \mathrm{arcosh}(\alpha))=\sqrt{\alpha^2-1}$ for $\alpha>1$, we conclude that in fact
\begin{equation}\label{eq:DefRu}
R(u)=\frac{\sinh (\sqrt{-K}u)}{\sqrt{-K}(1+\sinh^2(\sqrt{-K}u))^{\frac{1}{2}}}.
\end{equation}
After this preparation, we can now use Lemma \ref{lem:subst} to express the hyperbolic intersection probability in purely Euclidean terms:

\begin{proof}[Proof of Theorem \ref{intersection probability}]
It holds
\begin{align*}
\mathbb{P} ( \mathbf{E} \cap \mathbf{L} \neq \varnothing )
& =  C_{K}(d,d-q+\gamma,u)^{-1} \int_{G_{K}(d,q)} \int_{A_{-K}(d,d-q+\gamma)} \mathbf{1}\{E \cap uB^d_{K} \neq \varnothing\}\\
&\qquad\qquad\times \mathbf{1}\{ E \cap L \neq \varnothing\} \mu_{d-q+\gamma,K} (\dd E) \nu_{q,K} (\dd L)\\
&= C_{K}(d,d-q+\gamma,u)^{-1} \int_{G_{0}(d,q)} \int_{A_{0}(d,d-q+\gamma)} \left( 1+K\| \tau(E)\|^2 \right)^{-\frac{d+1}{2}}\\
&\qquad\qquad\times\mathbf{1}\{ E \cap (1/\sqrt{-K}) B^d_{0}\neq \varnothing \}\mathbf{1}\{ E \cap L \cap (1/\sqrt{-K})B^d_{0} \neq \varnothing\}\\
&\qquad\qquad\times \mathbf{1}\{ E \cap R(u)B^d_{0}\neq \varnothing \}\, \mu_{d-q+\gamma,0} (\dd E) \nu_{q,0} (\dd L).
\end{align*}
In order to apply a Blascke-Petkantschin-type formula as stated in Theorem 4.1 of \cite{dkt}, we separate the integrated function into two parts:
\begin{itemize}
\item the nonnegative measurable function $f:A_0(d,\gamma)\to\RR$ given by $f(E\cap L)= \mathbf{1}\{ E \cap L \cap (1/\sqrt{-K})B^d_{0} \neq \varnothing\}$,
\item the rotation-invariant, nonnegative measurable function $H:A_0(d,\gamma)\to\RR$ defined as $H(E)=\left( 1-\| \tau(E)\|^2 \right)^{-\frac{d+1}{2}}\mathbf{1}\{E \cap (1/\sqrt{-K}) B^d_{0}\neq \varnothing\}\mathbf{1}\{ E \cap R(u)B^d_{0}\neq \varnothing \} $.
\end{itemize} 
By rotation invariance, $H(E)$ only depends on $E$ through $d_0(o,E)=\|\tau(E)\|$, that is $H(E)=H_I(d_0(o,E))$ for some function $H_I:[0,\infty)\to\RR$, which in our case is given by
$$
H_I(d_0(o,E))=\left( 1+K d(o,E)^2 \right)^{-\frac{d+1}{2}}
\mathbf{1}\{ d_0(o,E)\leq 1/\sqrt{-K}\}
\mathbf{1}\{ d_0(o,E)\leq R(u)\}.
$$
Applying now \cite[Theorem 3.1]{dkt} we obtain
\begin{equation}\label{eq:sect}
\begin{split}
\mathbb{P} ( \mathbf{E} \cap \mathbf{L} \neq \varnothing ) &=\frac{D(d,d,\gamma)}{C_{K}(d,d-q+\gamma,u)}\int_{A_0(d,\gamma)}\mathbf{1}\{ E\cap (1/\sqrt{-K})B^d\}\\
&\hspace{5cm}\times d_0(o,E)^{-(d-q)}J_H(d_0(o,E))\,\mu_{\gamma,0}(\dd E),
\end{split}
\end{equation}
where $D(d,q,\gamma)$ is the constant from the statement of Theorem \ref{intersection probability} and the function $J_H:[0,\infty)\to\RR$ is given by
\begin{align}
J_H(r)&=\int_{0}^1 H(rz)z^q(1-z^2)^{\frac{d-q}{2}-1}\,\dd z\nonumber \\
&=\int_{0}^1 (1+Kr^2z^2)^{-\frac{d+1}{2}} z^q(1-z^2)^{\frac{d-q}{2}-1}\mathbf{1}\{rz\leq R(u)  \} \,\dd z\nonumber \\
&=\int_{0}^{1\wedge \frac{R(u)}{r}} (1+Kr^2z^2)^{-\frac{d+1}{2}} z^q(1-z^2)^{\frac{d-q}{2}-1}\,\dd z.\label{eq:J_H}
\end{align} 
Plugging this back into the expression \eqref{eq:sect} for $\mathbb{P} ( \mathbf{E} \cap \mathbf{L} \neq \varnothing )$ and then using the decomposition \eqref{eq:MuqEuclidean} of the motion-invariant measure $\mu_{\gamma,0}$, we obtain 
\begin{align*}
\mathbb{P} ( \mathbf{E} \cap \mathbf{L} \neq \varnothing ) &=
\frac{D(d,q,\gamma)}{C_{K}(d,d-q+\gamma,u)}\int_{G_0(d,\gamma)}\int_{L^\perp}\frac{\mathbf{1}\{(L+x)\cap (1/\sqrt{-K})B_0^d\}}{d_0(o,L+x)^{d-q}}\\
&\hspace{6cm}\times J_H(d_0(o,L+x))\,\lambda_{L^{\perp}}(\dd x)\nu_{\gamma,0}(\dd L).
\end{align*}
Since the integrated function is invariant under rotations, the inner integral does not depend on the choice of $L$. Thus, choosing for $L$ the canonical subspace $\RR^{d-\gamma}\subset\RR^d$, the probability can be written as
\begin{align*}
\mathbb{P} ( \mathbf{E} \cap \mathbf{L} \neq \varnothing ) &=
\frac{D(d,q,\gamma)}{C_{K}(d,d-q+\gamma,u)}\int_{\mathbb{R}^{d-\gamma}}\frac{\mathbf{1}\{ \|x\|< (1/\sqrt{-K}) \}}{\|x\|^{d-q}}J_H(\|x\|)\,\dd x.
\end{align*}
Next, we substitute $J_H$ according to \eqref{eq:J_H}:
\begin{align*}
\mathbb{P} ( \mathbf{E} \cap \mathbf{L} \neq \varnothing ) &=
\frac{D(d,q,\gamma)}{C_{K}(d,d-q+\gamma,u)}\int_{\mathbb{R}^{d-\gamma}}\frac{\mathbf{1}\{\|x\|<(1/\sqrt{-K}) \}}{\|x\|^{d-q}}\\
&\hspace{6cm}\times\int_{0}^{1\wedge \frac{R(u)}{\|x\|}} 
\frac{z^q(1-z^2)^{\frac{d-q}{2}-1}}{(1+Kr^2z^2)^{\frac{d+1}{2}} }\,\dd z
\dd x.
\end{align*}
Finally, we introduce spherical coordinates in $\RR^{d-\gamma}$ and obtain
\begin{align*}
\mathbb{P} ( \mathbf{E} \cap \mathbf{L} \neq \varnothing ) &=
\frac{D(d,q,\gamma)\omega_{d-\gamma}}{C_{K}(d,d-q+\gamma,u)}
\int_{0}^\infty \frac{\mathbf{1}\{r < (1/\sqrt{-K}) \}}{r^{d-q}}\\
&\hspace{6cm}\times\int_{0}^{1\wedge \frac{R(u)}{r}} 
\frac{z^q(1-z^2)^{\frac{d-q}{2}-1}}{(1+Kr^2z^2)^{\frac{d+1}{2}} }r^{d-\gamma-1}\,\dd z\dd r.
\end{align*}
This completes the proof of Theorem \ref{intersection probability}.
\end{proof}
Now we can prove Corollary \ref{cor:IntersectionProbabdtoInfinity}.

\begin{proof}[Proof of Corollary \ref{cor:IntersectionProbabdtoInfinity}]
First note that the constant $C_{K}(d,d-q+\gamma,u)$, by definition and the fact that $\cosh (x) \geq 1$, can be bounded from below by a constant independent of $d$. Moreover, also by definition we have
$$ D(d,q,\gamma)\omega_{d-\gamma} = \frac{\omega_{\gamma +1} \omega_{q-\gamma} \omega_{d-q}}{\omega_{d-q+\gamma+1} }.$$
Using the definition of $\omega_n$ and the Stirling formula, we see that the latter expression is, up to a constant, asymptotically equivalent to $(\frac{d-q+\gamma+1}{2 e})^{\frac{\gamma+1}{2}}$. Thus, by Theorem \ref{intersection probability} it suffices to show that
	$$\lim_{d\to\infty}  \Big( \frac{d-q+\gamma+1}{2 e}\Big)^{\frac{\gamma+1}{2}} \int_{0}^{1/\sqrt{-K}}   {r^{q-\gamma-1}}
\int_{0}^{1\wedge \frac{R(u)}{r}} 
\frac{z^q(1-z^2)^{\frac{d-q}{2}-1}}{(1+Kr^2z^2)^{\frac{d+1}{2}} }\, \dd z
\dd r= 0 .$$
Indeed, in the proof of Theorem \ref{phase transition}, see Equation \eqref{asympsup2}, it is more generally shown that we have
$$\int_{0}^{1/\sqrt{-K}} \int_{0}^{1\wedge \frac{R(u)}{r}} 
\frac{z^q(1-z^2)^{\frac{d-q}{2}-1}}{(1+Kr^2z^2)^{\frac{d+1}{2}} }\, \dd z
\dd r= O(d^{-\frac{q+1}{2}}), \quad d \to \infty.$$
Since $q >\gamma$, this shows that 
$$\lim_{d\to\infty}  \Big( \frac{d-q+\gamma+1}{2 e}\Big)^{\frac{\gamma+1}{2}} \int_{0}^{1/\sqrt{-K}}   {r^{q-\gamma-1}}
\int_{0}^{1\wedge \frac{R(u)}{r}} 
\frac{z^q(1-z^2)^{\frac{d-q}{2}-1}}{(1+Kr^2z^2)^{\frac{d+1}{2}} }\, \dd z
\dd r= 0,$$
as desired.
\end{proof}

\section{Density of the distance}

Let $d_{K}(o,\mathbf{E}\cap \mathbf{L})$ be the distance from $o$ of the intersection of $\mathbf{E}$ with $\mathbf{L}$. We know from Theorem \ref{intersection probability} that the distribution of this random variable has an atom at $+\infty$. The purpose is now to prove that on the interval $(0,\infty)$ the law of $d_{K}(o,\mathbf{E}\cap \mathbf{L})$ is absolutely continuous with respect to the Lebesgue measure and to determine its density. For that purpose, we will calculate the probability that this distance is smaller than $\delta \in (0, \infty)$. We follow essentially the same procedure as before, express the invariant measure $\mu_{q,K}$ according to Lemma \ref{lem:subst} in Euclidean terms and then use the Blaschke-Petkantschin-type formula from \cite{dkt}. 

\begin{proof}[Proof of Theorem \ref{density}]
Recalling the definition of $R(\,\cdot\,)$ from \eqref{eq:DefRu}, we can write
\begin{align}
&\mathbb{P} ( d_{K}(o, \mathbf{E} \cap \mathbf{L}) \leq \delta )\nonumber\\
& =  C_{K}(d,d-q+\gamma,u)^{-1} \int_{G_{K}(d,q)} \int_{A_{K}(d,d-q+\gamma)} 
\mathbf{1}\{E \cap uB^d_{K} \neq \varnothing \}\nonumber\\
&\hspace{6cm}
\times\mathbf{1}\{d_{K}(o,E\cap L)\leq \delta \}\, \mu_{d-q+\gamma,K} (\dd E) \nu_{q,K} (\dd L)\nonumber \\
&= C_{K}(d,d-q+\gamma,u)^{-1} \int_{G_{0}(d,q)} \int_{A_{0}(d,d-q+\gamma)} \left( 1+K\| \tau(E)\|^2 \right)^{-\frac{d+1}{2}}
\mathbf{1}\{ E \cap (1/\sqrt{-K}) B^d_{0}\neq \varnothing \}
\nonumber \\
&\hspace{2cm}\times \mathbf{1}\{d_0(o,E\cap L)\leq R(\delta)\}\mathbf{1}{\lbrace E \cap R(u)B^d_{0}\neq \varnothing \rbrace}\, \mu_{d-q+\gamma,0} (\dd E) \nu_{q,0} (\dd L)\nonumber \\
&=\frac{D(d,q,\gamma)}{C_{K}(d,d-q+\gamma,u)}\int_{A_0(d,\gamma)}
\mathbf{1}{\lbrace d_{0}(o,E)\leq \delta \rbrace}
d_0(o,E)^{-(d-q)}J_H(d_0(o,E))\,\mu_{\gamma,0}(\dd E).\label{eq:distrib}
\end{align}
In the last expression the function $H$ is the same as in the prevision section and, consequently, the function $J_H$ is also the same as before, recall \eqref{eq:J_H}. What differs is the function $f$, which this time is given by $f(E\cap F)=\mathbf{1}{\lbrace d_{0}(o,E\cap L)\leq R(\delta) \rbrace}$. Therefore we can substitute $J_H$ from \eqref{eq:J_H} to \eqref{eq:distrib} and obtain
\begin{align}
\mathbb{P} ( d_{K}(o, \mathbf{E} \cap \mathbf{L}) \leq \delta )
& =  
\frac{D(d,q,\gamma)}{C_{K}(d,d-q+\gamma,u)}
\int_{A_0(d,\gamma)}\frac{\mathbf{1}{\lbrace d_0(o,E)\leq R(\delta) \rbrace}}{d_0(o,E)^{d-q}}
\nonumber \\
&\hspace{3cm}\times \int_{0}^{1} 
\frac{z^q(1-z^2)^{\frac{d-q}{2}-1}}{(1+Kr^2z^2)^{\frac{d+1}{2}} } \mathbf{1}\{ d_0(o,E)z\leq R(u) \} \,\dd z\mu_{\gamma,0}(\dd E).\label{eq:distrib2}
\end{align}
Next, we substitute \eqref{eq:MuqEuclidean} into \eqref{eq:distrib2} and, as in the previous section, use the fact that the resulting expression is independent of the choice of the linear subspace $L$, which we then choose as $\RR^{d-\gamma}$. We obtain
\begin{align}
\mathbb{P} ( d_{K}(o, \mathbf{E} \cap \mathbf{L}) \leq \delta )
& =  
\frac{D(d,q,\gamma)}{C_{K}(d,d-q+\gamma,u)}\int_{\mathbb{R}^{d-\gamma}}\frac{\mathbf{1}{\lbrace \|x\|\leq R(\delta) \rbrace}}{\|x\|^{d-q}}\\
&\hspace{4cm}\times \int_{0}^{1\wedge \frac{R(u)}{\|x\|}} 
\frac{z^q(1-z^2)^{\frac{d-q}{2}-1}}{(1+Kr^2z^2)^{\frac{d+1}{2}} }\,\dd z
\dd x.\nonumber
\end{align}
Transforming the last expression to spherical coordinates yields
\begin{align}
\mathbb{P} ( d_{K}(o, \mathbf{E} \cap \mathbf{L}) \leq \delta )
& =  
\frac{D(d,q,\gamma)\omega_{d-\gamma}}{C_{K}(d,d-q+\gamma,u)}\int_{0}^{R(\delta)} r^{-(d-q)}
\int_{0}^{1\wedge \frac{R(u)}{r}} 
\frac{z^q(1-z^2)^{\frac{d-q}{2}-1}}{(1+Kr^2z^2)^{\frac{d+1}{2}} }\, \dd z r^{d-\gamma-1}
\,\dd r\nonumber\\
&=  
\frac{D(d,q,\gamma)\omega_{d-\gamma}}{C_{K}(d,d-q+\gamma,u)}\int_{0}^{R(\delta)} r^{(q-\gamma-1)}
\int_{0}^{1\wedge \frac{R(u)}{r}} 
\frac{z^q(1-z^2)^{\frac{d-q}{2}-1}}{(1+Kr^2z^2)^{\frac{d+1}{2}} }\,\dd z\,\dd r.\label{re:density}
\end{align}
Differentiation of \eqref{re:density} with respect to $\delta$ together with the fact that ${\dd R(\delta)\over\dd\delta}={\cosh^{-2}(\sqrt{-K}\delta)}$ for $\delta>0$ complete the proof of Theorem \ref{density}.\end{proof}

Now we will prove Corollary \ref{moments}.

\begin{proof}[Proof of Corollary \ref{moments}]
Note that for $\alpha >0$ it holds $d(p,\mathbf{E} \cap \mathbf{L} )^\alpha = \infty$ with positive probability. Now let $\alpha <0$. Then the claim follows from Theorem \ref{density}, by checking the integrability of the function $\delta^\alpha f_{d,q,\gamma,K}(\delta)$ at 0. Indeed, near 0 the function $f_{d,q,\gamma,K}(\delta)$ is, up to a constant, asymptotically equivalent to $\delta^{q-\gamma-1}$, which follows from the definition of $R(\delta)$ and by using the Taylor expansion of the functions $\cosh(x)$ and $\sinh(x)$ at $x=0$. This proves the first statement of the Corollary. For the second statement it suffices to note that $f_{d,q,\gamma,K}(\delta)$ decays exponentially as $\delta\to\infty$.
\end{proof}

\section{Behaviour as the curvature tends to $0$}\label{sec:CurvatureToZero}

In this section we study the behaviour of the intersection probabilities as the curvature $K<0$ of the space $\MM_K^{d}$ tends to $0$. 
For every $K<0$ the space of constant sectional curvature $\MM_{K}^d$ is defined in $\mathbb{R}^{d+1}$ as a hyperboloid given by the elements $x=(x_1,\ldots,x_{d+1})$ satisfying $x_{d+1}>0$ and the equation $\Vert x\Vert_L^2=\frac{1}{K}$. As $K$ increases to $0$ the hyperboloid flattens and moves away from $0$ in $\mathbb{R}^{d+1}$. The corresponding projective model $\mathbb{D}_{K}^d$ is defined inside the Euclidean ball of radius $1/\sqrt{-K}$, which, as $K$ tends to $0$, grows to infinity. Intuitively one would expect that the distances in the projective model will converge to distances in the Euclidean space. Moreover, the expression in Euclidean distance of the radius of a ball in the projective model should converge pointwise to the identity map as the curvature $K$ tends to $0$. This is exactly what happens, precisely, $R_{K}(u)$ tends to $u$ as $K$ approaches $0$. Recall that
$$
R_{K}(u)  := R(u)  =\frac{\sinh (\sqrt{-K}u)}{\sqrt{-K}(1+\sinh^2(\sqrt{-K}u))^{\frac{1}{2}}}.
$$
Both $\sinh(\sqrt{-K}u)$ and $\sqrt{-K}(1+\sinh^2(\sqrt{-K}u))^{\frac{1}{2}}$ tend to $0$ as $K$ tends to $0$. By L'H\^{o}pital's rule 
and a substitution of $t=\sqrt{-K}$, we get
\begin{align*}
\lim_{K\uparrow 0} R_{K}(u)&=\lim_{K\uparrow 0}\frac{\sinh (\sqrt{-K}u)}{\sqrt{-K}(1+\sinh^2(\sqrt{-K}u))^{\frac{1}{2}}}\\
&=\lim_{t\downarrow 0}\frac{u\cosh (tu)}{(1+\sinh^2(tu))^\frac{1}{2}(1+tu\sinh (tu)\cosh (tu))}=u,\qquad u>0.
\end{align*}

We turn now to the intersection probability of the flats $\mathbf{E}$ and $\mathbf{L}$. As we calculated in the previous section, the probability that a $k$-flat intersects a $q$-flat that passes through the origin is not trivial in the case of constant negative curvature $K$, but it is trivial in the case of the Euclidean space, where the curvature is $0$. It is natural to ask whether the probability tends to $1$ as the curvature is flattened. This turns out to be true and is an immediate consequence of the first statement in Theorem \ref{curvaturezero}.

\begin{proof}[Proof of Theorem \ref{curvaturezero}]
Recall that $\mathcal{L}(K)$ is the law of the random variable $d_K(o,\mathbf{E}\cap \mathbf{L})$. In order to prove the weak convergence of $\mathcal{L}(K)$ to $\mathcal{L}(0)$, as $K\uparrow 0$, we will prove the following limit result: $$
\lim_{K\uparrow  0}
\mathbb{P}_{K}(d_{K}(o, \mathbf{E}\cap \mathbf{L})\leq \delta ) =
\mathbb{P}_{0}(d_0(o,\mathbf{E}\cap \mathbf{L})\leq \delta),\qquad\delta\in(0,\infty).
$$
Here, the notation $\mathbb{P}_K$ indicates that the random flats $\mathbf{E}$ and $\mathbf{L}$ are chosen in $\mathbb{M}_K^d$.
In the previous sections we expressed the cumulative density function $  
\mathbb{P}_K ( d_{K}(o, \mathbf{E} \cap \mathbf{L}) \leq \delta )$ as
$$  
\mathbb{P}_K ( d_{K}(o, \mathbf{E} \cap \mathbf{L}) \leq \delta )
=  
\frac{D(d,q,\gamma)\omega_{d-\gamma}}{C_{K}(d,d-q+\gamma)}\int_{0}^{R_K(\delta)} r^{(q-\gamma-1)}
\int_{0}^{1\wedge \frac{R_{K}(u)}{r}} 
\frac{z^q(1-z^2)^{\frac{d-q}{2}-1}}{(1+Kr^2z^2)^{\frac{d+1}{2}} }\,\dd z\dd r .
$$

We will separate the first integral in two parts and apply the theorem of dominated convergence in each part. For simplicity in the calculations we define:
\begin{align*}
F_K(r,z) &:=\mathbf{1}\lbrace  z\leq   1\wedge R_{K}(u)/r \rbrace   (1+Kr^2z^2)^{-\frac{d+1}{2}} z^q(1-z^2)^{\frac{d-q}{2}-1} 
\intertext{and}
G(r,z) &:=\mathbf{1}\lbrace z\leq 1\wedge u/ r \rbrace  z^q(1-z^2)^{\frac{d-q}{2}-1}.
\end{align*}

Let $r\geq u$, we remark that $u \geq R_K(u) $ for any choice of $K$, therefore $r\geq u \geq R_K(u)$.
The numbers $r,z$, that satisfy the restrictions of the inner integral, must satisfy $rz\leq r\frac{R_K(u)}{r}=R_K(u)$ and consequently $-Kr^2z^2\leq -KR^2_K(u)$. Since
$$
\lim_{K\uparrow 0}KR_{K}^2(u) = \lim_{K\uparrow 0}\frac{\sinh^2(u\sqrt{-K})}{1+\sinh^2(u\sqrt{-K})}=0,
$$
we can assume that $-Kr^2z^2\leq 1/2$ for any such a choice of $r,z$, if $K$ is sufficiently close to $0$. Consequently $(1+Kr^2z^2)^{-\frac{d+1}{2}}\leq 2^{\frac{d+1}{2}}$, which is a constant bound independent of $r$ and $z$. We combine these inequalities to obtain
\begin{align*}
\mathbf{1}\lbrace  z\leq   R_{K}(u)/r \rbrace  (1+Kr^2z^2)^{-\frac{d+1}{2}} z^q(1-z^2)^{\frac{d-q}{2}-1} \leq 
2^{\frac{d+1}{2}} \mathbf{1}\lbrace z\leq 1\wedge u/ r \rbrace   z^q(1-z^2)^{\frac{d-q}{2}-1} .
\end{align*}
We achieved $
F_K(r,z)\leq 2^{\frac{d+1}{2}}G(r,z).
$
It is obvious that
$\lim_{K\uparrow 0} F_K(r,z)=G(r,z)$ and that
$
\int_0^1 2^{\frac{d+1}{2}} G(r,z)\,\dd z <\infty
$. By the dominated convergence theorem, for every $r>u$ we obtain
$$
\lim_{K\uparrow 0} \int_0^1 F_K(r,z) \,\dd z= \lim_{K\uparrow 0}\int_{0}^{1\wedge \frac{R_{K}(u)}{r}} 
\frac{z^q(1-z^2)^{\frac{d-q}{2}-1}}{(1+Kr^2z^2)^{\frac{d+1}{2}} }\,\dd z= \int_0^{1\wedge \frac{u}{r}}z^q (1-z^2)^{\frac{d-q}{2}-1}\,\dd z.
$$

For $r<u$, there is a $K'\in (-1,0)$ such that $R_K(u)>r$ for every $K>K'$, since $R_K(u)\to u$. Moreover  $KR_K^2$ tends to $0$, therefore $K'R_{K'}^2< K R_K^2<Kr^2$. It follows that
\begin{align}\label{eq:int}
\int_{0}^{1\wedge \frac{R_{K}(u)}{r}} 
\frac{z^q(1-z^2)^{\frac{d-q}{2}-1}}{(1+Kr^2z^2)^{\frac{d+1}{2}} }\,\dd z=\int_{0}^{1 } 
\frac{z^q(1-z^2)^{\frac{d-q}{2}-1}}{(1+Kr^2z^2)^{\frac{d+1}{2}} }\,\dd z<\int_{0}^{1 } 
\frac{z^q(1-z^2)^{\frac{d-q}{2}-1}}{(1+K'R_{K'}^2z^2)^{\frac{d+1}{2}} }\,\dd z<\infty.
\end{align}
Applying one more time the dominated convergence theorem, we obtain for every $r>0$,
 $$\lim_{K\uparrow 0} \int_0^1 F_K(r,z)\,\dd z=\int_0^1 G(r,z)\,\dd z.$$

Next step is to repeat similar arguments for the second integration. We define $M_K(r):=\mathbf{1}\lbrace r \leq R_K(\delta) \rbrace r^{q-\gamma-1} \int_0^1 F_K(r,z)\,\dd z$ and $N(r):=\mathbf{1}\lbrace r\leq \delta\rbrace r^{q-\gamma-1} \int_0^1 G(r,z)\,\dd z $. Recall that $R_K(\delta)\leq \delta$ for every $\delta>0$. The previous calculations showed that $M_K$ converges to $N$ pointwise as $K$ tends to $0$. The only difficulty in this case is that we split the integral in different regions depending on $K$. 
 
For $r\geq R_K(u)$ we have already shown that $M_K(r)\leq 2^{\frac{d+1}{2}} N(r)$ for $K$ sufficiently close to $0$ uniformly for all $r$. Next, we integrate $2^{\frac{d+1}{2}}N(r)$ and obtain
$$
\int_{R_K(u)}^{\infty}2^{\frac{d+1}{2}}N(r)\,\dd r \leq 2^{\frac{d+1}{2}}\int_0^{\infty}r^{q-\gamma-1}\int_{0}^{1\wedge \frac{u}{r}}  z^q(1-z^2)^{\frac{d-q}{2}-1}\,\dd z \dd r= 2^{\frac{d+1}{2}}\frac{C_0(d,d-q+\gamma)}{D(d,q,\gamma)\omega_{d-\gamma}}<\infty.
$$
For $r<R_K(u)$, as before $KR^2_K$ tends to $0$, thus $K'R^2_{K'}<KR^2_K<Kr^2$ for every large $K$ and 
\begin{align*}
\int_{0}^{R_K(u)} 
M_K(r) \, \dd r  &\leq \int_{0}^{R_K(u)} 
r^{q-\gamma-1}
\int_{0}^{1} 
\frac{z^q(1-z^2)^{\frac{d-q}{2}-1}}{(1+Kr^2z^2)^{\frac{d+1}{2}} }\,\dd z\dd r \\
&\leq 
\int_{0}^{R_K(u)} 
r^{q-\gamma-1}
\int_{0}^{1} 
\frac{z^q(1-z^2)^{\frac{d-q}{2}-1}}{(1+K'R_{K'}^2(u)z^2)^{\frac{d+1}{2}} }\,\dd z\dd r\\
&\leq 
\int_{0}^u
r^{q-\gamma-1}
\, \dd r
\int_{0}^{1} 
\frac{z^q(1-z^2)^{\frac{d-q}{2}-1}}{(1+K'R_{K'}^2(u)z^2)^{\frac{d+1}{2}} }\,\dd z<\infty.
\end{align*}
 Combining these bounds, we can apply the dominated convergence theorem once again to conclude that
\begin{align*}
&\lim_{K\uparrow 0} \int_{0}^{R_K(\delta)} r^{q-\gamma-1} \int_{0}^{1\wedge \frac{R_{K}(u)}{r}} 
 \frac{z^q(1-z^2)^{\frac{d-q}{2}-1}}{(1+Kr^2z^2)^{\frac{d+1}{2}} }\, \dd z\dd r
  =\lim_{K\uparrow 0} \int_{0}^{\infty} M_K(r)\,\dd r\\
 &= \int_{0}^{\infty} N(r)\,\dd r = \int_{0}^{\delta} r^{q-\gamma-1}\int_0^{1\wedge \frac{u}{r}}z^q (1-z^2)^{\frac{d-q}{2}-1}\,\dd z \dd r.
\end{align*}
To complete the proof we eventually need to prove that $\lim_{K\uparrow 0 } C_{-K}(d,d-q+\gamma,u) = C_{0}(d,d-q+\gamma,u)$ for $u>0$. However, this easily follows from \eqref{normconstant}, by using a Taylor expansion of the functions $\cosh(x)$ and $\sinh(x)$ at $x=0$.
\end{proof}

\section{Phase transition in terms of the curvature and dimension} \label{sec:phases}

First, we provide the calculations that are necessary to justify the claims we made in Remark \ref{re:changecurva}. 

\begin{proof}[Proof of Remark \ref{re:changecurva}]
First we observe the following relation about the function $R$ and different curvatures: $\sqrt{-K} R_K(u)= R_{-1}(\sqrt{-K} u) $ for $u>0$ and $K<0$. Similarly, we have the following relation about the normalization constant: $  C_K(d,d-q+\gamma,u )=(\sqrt{-K})^{-q+\gamma}C_{-1}(d,d-q+\gamma,\sqrt{-K} u)$. In fact, applying the substitution $r=s\sqrt{-K} $ in \eqref{normconstant}, we see that
\begin{align}\label{eq:constantC}
C_K\left(d,q,u \right)&= \omega_{d-q}(\sqrt{-K})^{-d+q+1} \int_0^{u} \cosh^{q}( \sqrt{-K}s)\sinh^{d-q-1}(\sqrt{-K} s) \, \dd s\\ \nonumber
&=(\sqrt{-K})^{-d+q} \omega_{d-q} \int_0^{u} \cosh^{q}( \sqrt{-K}s)\sinh^{d-q-1}(\sqrt{-K} s) (\sqrt{-K})\, \dd s\\ \nonumber
&=(\sqrt{-K})^{-d+q} \omega_{d-q} \int_0^{\sqrt{-K}u} \cosh^{q}( r)\sinh^{d-q-1}( r) \,\dd r \\ \nonumber
&=(\sqrt{-K})^{-d+q}C_{-1}(d,q,u).
\end{align}
We can now express $p_{K,u}$ as integral according to Theorem \ref{intersection probability}, we use Equation \eqref{eq:constantC} and once more we apply the substitution $r=s\sqrt{-K}$. Using the notation $\mathbb{P}_K^{(u)}$ to indicate that the random flats $\mathbf{E}$ and $\mathbf{L}$ are chosen in $\mathbb{M}_K^d$ and that $\mathbf{E}\cap u{B}_K^d\neq\varnothing$, this yields
\begin{align*}
p_{K,u}&=\mathbb{P}^{(u)}_{K} (  \mathbf{E} \cap \mathbf{L}\neq \varnothing )\\
&=  \frac{D(d,q,\gamma)\omega_{d-\gamma}}{  C_K(d,d-q+\gamma,u)}
\int_{0}^{1/\sqrt{-K}}  s^{(q-\gamma-1)}
\int_{0}^{1\wedge \frac{ R_{K}(u)}{ s}} 
\frac{z^q(1-z^2)^{\frac{d-q}{2}-1}}{(1-(\sqrt{-K} s)^2z^2)^{\frac{d+1}{2}} }\, \dd z \dd s\\
&=  
\frac{D(d,q,\gamma)\omega_{d-\gamma}}{(\sqrt{-K})^{q-\gamma}  C_K(d,d-q+\gamma,u)}\int_{0}^{1/\sqrt{-K}} (  \sqrt{-K}s)^{(q-\gamma-1)}\\
&\hspace{4cm}\times
\int_{0}^{1\wedge \frac{\sqrt{-K} R_{K}(u)}{\sqrt{-K} s}} 
\frac{z^q(1-z^2)^{\frac{d-q}{2}-1}}{(1-(\sqrt{-K} s)^2z^2)^{\frac{d+1}{2}} }\sqrt{-K} \, \dd z \dd s\\
&=  
\frac{D(d,q,\gamma)\omega_{d-\gamma}}{C_{-1}(d,d-q+\gamma,u)}\int_{0}^{1} r^{(q-\gamma-1)}
\int_{0}^{1\wedge \frac{ R_{-1}(\sqrt{-K}u)}{ r}} 
\frac{z^q(1-z^2)^{\frac{d-q}{2}-1}}{(1-r^2z^2)^{\frac{d+1}{2}} } \, \dd z \dd r\\
&=\mathbb{P}^{(\sqrt{-K}u)}_{-1} (  \mathbf{E} \cap \mathbf{L}\neq \varnothing )=p_{-1,v}.
\end{align*}
We conclude the calculations by repeating the same arguments for the density function of the distance of $\mathbf{E}\cap\mathbf{L}$ to $o$:
\begin{align*}
&\mathbb{P}_{K}^{(u)} ( d_{K}(o, \mathbf{E} \cap \mathbf{L}) \leq \delta )\\
&=  
\frac{D(d,q,\gamma)\omega_{d-\gamma}}{C_K(d,d-q+\gamma,u)}\int_{0}^{R_{K}(\delta)} s^{(q-\gamma-1)}
\int_{0}^{1\wedge \frac{R_{K}(u)}{s}} \frac{z^q(1-z^2)^{\frac{d-q}{2}-1}}{(1-(\sqrt{-K} s)^2z^2)^{\frac{d+1}{2}} }\, \dd z \dd s\\
&=  
\frac{D(d,q,\gamma)\omega_{d-\gamma}}{(\sqrt{-K})^{q-\gamma}  C_K(d,d-q+\gamma,u)}
\int_{0}^{ \sqrt{-K} R_{K}(\delta) } r^{(q-\gamma-1)}
\int_{0}^{1\wedge \frac{\sqrt{-K}R_{K}(u)}{r}} 
\frac{z^q(1-z^2)^{\frac{d-q}{2}-1}}{(1-r^2z^2)^{\frac{d+1}{2}} }\, \dd z  \dd r\\
&=\frac{D(d,q,\gamma)\omega_{d-\gamma}}{C_{-1}(d,d-q+\gamma,u)}
\int_{0}^{ R_{-1}( \sqrt{-K} \delta )} r^{(q-\gamma-1)}
\int_{0}^{1\wedge \frac{R_{-1}(\sqrt{-K}u )}{r}} 
\frac{z^q(1-z^2)^{\frac{d-q}{2}-1}}{(1-r^2z^2)^{\frac{d+1}{2}} }\, \dd z  \dd r\\
&=\mathbb{P}_{-1}^{(v)} ( d_{-1}(o, \mathbf{E} \cap \mathbf{L}) \leq \sqrt{-K}\delta ).
\end{align*}
This completes the proof.
\end{proof}

Finally, we give a proof of Theorem \ref{phase transition}. In particular, we use the notation $a(d)\sim b(d)$ for two real-valued sequences $a(d)$ and $b(d)$ depending on a parameter $d$ to indicate that $a(d)/b(d)\to 1$ as $d\to\infty$. Moreover, we use the usual Landau notation $a(d)=O(b(d))$ if $\limsup_{d \to \infty}|a(d)/b(d)|<\infty$ and $a(d)=o(b(d))$ provided that $\lim_{d\to\infty}|a(d)/b(d)|=0$.

\begin{proof}[Proof of Theorem \ref{phase transition}]

We start with the subcritical phase. That it, we assume that
$$ - K (d) d \to 0, \quad d \to \infty.$$
Recall that
\begin{align*}
& \mathbb{P}^{(u)}_{K} (  \mathbf{E} \cap \mathbf{L}\neq \varnothing )=  \frac{D(d,q,\gamma)\omega_{d-\gamma}}{  C_K(d,d-q+\gamma,u)}
\int_{0}^{1/\sqrt{-K}}  r^{q-\gamma-1}
\int_{0}^{1\wedge \frac{ R_{K}(u)}{ r}} 
\frac{z^q(1-z^2)^{\frac{d-q}{2}-1}}{(1-(\sqrt{-K} r)^2z^2)^{\frac{d+1}{2}} }\, \dd z \dd r.
\end{align*}
Now we calculate asymptotic expressions, for $d \to \infty$, step by step.

\paragraph{\textbf{Step 1:}}
Similarly as argued in the proof of Corollary \ref{cor:IntersectionProbabdtoInfinity} by an application of Stirling's formula we have
\begin{equation}\label{asympsub1}
D(d,q,\gamma)\omega_{d-\gamma} \sim  \omega_{\gamma+1} \omega_{q-\gamma}  \Big( \frac{1}{2\pi } \Big)^{\frac{\gamma+1}{2}} d^{\frac{\gamma+1}{2}}, \quad d \to \infty .
\end{equation}
\paragraph{\textbf{Step 2:}} By observations already made and the fact that  $- K (d) d \to 0$, $d \to \infty$, we have
 \begin{equation}\label{asympsub2}
C_{K}(d,d-q+\gamma, u) \sim  \frac{\omega_{q-\gamma}}{q-\gamma}u^{q-\gamma} , \quad d \to \infty .
\end{equation}
\paragraph{\textbf{Step 3:}} It remains to consider the asymptotics of the expression
$$\int_{0}^{1/\sqrt{-K}}    {r^{q-\gamma-1}}
\int_{0}^{1\wedge \frac{R_K(u)}{r}} 
\frac{z^q(1-z^2)^{\frac{d-q}{2}-1}}{(1+Kr^2z^2)^{\frac{d+1}{2}} }\, \dd z
\dd r.$$
We use the estimate
$$\int_{0}^{1/\sqrt{-K}}    {r^{q-\gamma-1}}
\int_{0}^{1\wedge \frac{R_K(u)}{r}} 
\frac{z^q(1-z^2)^{\frac{d-q}{2}-1}}{(1+Kr^2z^2)^{\frac{d+1}{2}} }\, \dd z
\dd r \geq  J(d,q,\gamma, K)$$
with 
$$J(d,q,\gamma, K) = \int^{1/\sqrt{-K}} _{{R_K(u)}}   {r^{q-\gamma-1}}
\int_{0}^{ \frac{R_K(u)}{r}} 
\frac{z^q(1-z^2)^{\frac{d-q}{2}-1}}{(1+Kr^2z^2)^{\frac{d+1}{2}} }\, \dd z
\dd r .$$
We will determine the asymptotics of this term. We first use the substitution $v = r z$, then the substitution $s=\frac{1}{r}$  together with Fubini's theorem to get
\begin{align*}
	J(d,q,\gamma, K) & = \int_{R_K(u)}^{ 1/\sqrt{-K}} {r^{-\gamma-2}} \int_0^{{R_K(u)}}   
\frac{v^q(1- \frac{v^2}{r^2})^{\frac{d-q}{2}-1}}{(1+K v^2)^{\frac{d+1}{2}} }\, \dd v
\dd r \\
& =   \int_{\sqrt{-K}}^{ 1/R_K(u)}{s^{\gamma}} \int_0^{{R_K(u)}}   
\frac{v^q(1- {s^2v^2})^{\frac{d-q}{2}-1}}{(1+K v^2)^{\frac{d+1}{2}} }\, \dd v
\dd s\\
& = \int_0^{{R_K(u)}} v^q (1+K v^2)^{-\frac{d+1}{2}}  \int_{\sqrt{-K}}^{ 1/R_K(u)} {s^{\gamma}} 
{(1- {s^2v^2})^{\frac{d-q}{2}-1}}\, \dd s
\dd v.
\end{align*}
Now we will use Laplace's method for asymptotic approximations of integrals for the latter expression. More precisely, an application of \cite[Theorem 1 on page 58]{wong} gives
$$ \int_{\sqrt{-K}}^{ 1/R_K(u)} {s^{\gamma}} 
{(1- {s^2v^2})^{\frac{d-q}{2}-1}}\, \dd s \sim \Gamma \Big(\frac{\gamma+1}{2}\Big) 2^{\frac{\gamma-1}{2}} v^{-\gamma-1} d^{-\frac{\gamma+1}{2}},  \quad d \to \infty.$$
From this we obtain
\begin{align}\label{asympsub3}
\begin{split}
	& \int_{0}^{1/\sqrt{-K}}    {r^{q-\gamma-1}}
\int_{0}^{1\wedge \frac{R_K(u)}{r}} 
\frac{z^q(1-z^2)^{\frac{d-q}{2}-1}}{(1+Kr^2z^2)^{\frac{d+1}{2}} }\, \dd z
\dd r \\
& \geq \Gamma \Big(\frac{\gamma+1}{2}\Big) 2^{\frac{\gamma-1}{2}}  d^{-\frac{\gamma+1}{2}}  \int_0^{{R_K(u)}} v^{q-\gamma-1} (1+K v^2)^{-\frac{d+1}{2}} \dd v \\
& \sim \Gamma \Big(\frac{\gamma+1}{2}\Big) 2^{\frac{\gamma-1}{2}} \frac{u^{q-\gamma}}{q-\gamma} d^{-\frac{\gamma+1}{2}}  ,
\end{split}
\end{align}
where we made use of the assumption $-K (d) d \to 0$, $d \to \infty$, and the fact that $R_K (u)  \to u$, $K \uparrow 0$, in the last step. Combining \eqref{asympsub1}, \eqref{asympsub2} and \eqref{asympsub3} eventually yields
\begin{align*}
	\liminf_{d \to \infty} \mathbb{P}^{(u)}_{K} (  \mathbf{E} \cap \mathbf{L}\neq \varnothing ) & \geq \lim_{d \to \infty}  \omega_{\gamma+1} \omega_{q-\gamma}  \Big( \frac{1}{2\pi } \Big)^{\frac{\gamma+1}{2}} d^{\frac{\gamma+1}{2}} \frac{q-\gamma}{\omega_{q-\gamma}u^{q-\gamma}} \Gamma \Big(\frac{\gamma+1}{2}\Big) 2^{\frac{\gamma-1}{2}} \frac{u^{q-\gamma}}{q-\gamma} d^{-\frac{\gamma+1}{2}} \\
	& = \omega_{\gamma+1} \Big( \frac{1}{2\pi }\Big)^{\frac{\gamma+1}{2}}  \Gamma \Big(\frac{\gamma+1}{2}\Big) 2^{\frac{\gamma-1}{2}}  =1,
\end{align*}
by definition of $ \omega_{\gamma+1}$. This shows that
$$\lim_{d \to \infty} \mathbb{P}^{(u)}_{K} (  \mathbf{E} \cap \mathbf{L}\neq \varnothing ) = 1,$$
as claimed. 

Now we turn to the supercritical case. That is, we assume that
$$\lim_{d\to \infty}-K(d) d = \infty.$$
By Remark \ref{re:changecurva}, if we denote, as above, by $p_{K,u}$ the probability that $\mathbf{E}$ and $\mathbf{L}$ in $\mathbb{M}^d_{K}$ intersect,  then $p_{K,u}=p_{-1,\sqrt{-K}u}$. Thus, abbreviating $R_{-1} (x) = R(x)$ for $x>0$, by Theorem \ref{intersection probability} we get
$$p_{K,u}=p_{-1,\sqrt{-K}u} 
= \frac{D(d,q,\gamma)\omega_{d-\gamma}}{C_{-1}(d,d-q+\gamma, \sqrt{-K} u)}
\int_{0}^{1}   {r^{q-\gamma-1}}
\int_{0}^{1\wedge \frac{R(\sqrt{-K}u)}{r}} 
\frac{z^q(1-z^2)^{\frac{d-q}{2}-1}}{(1-r^2z^2)^{\frac{d+1}{2}} }\, \dd z
\dd r.$$
Next we use the estimate
\begin{align*}
C_{K}(d,d-q+\gamma, u) & =  (\sqrt{-K}) {\omega_{q-\gamma}} \int_0^u \cosh^{d-q+\gamma} (\sqrt{-K} s)  \sinh^{q-\gamma-1} (\sqrt{-K} s)  ds \\
& \geq  (\sqrt{-K}) {\omega_{q-\gamma}} \int_{u(1-\varepsilon)}^u \cosh^{d-q+\gamma} (\sqrt{-K} s)  \sinh^{q-\gamma-1} (\sqrt{-K} s)  ds\\
& \geq c (\sqrt{-K})^{q-\gamma} \exp\Big( -\frac{u^2(1-\varepsilon)^2}{2} Kd  \Big)
\end{align*}
for some positive and finite constant $c \in (0,\infty)$ and some small $\varepsilon \in (0,1)$, where in the last step we have used the Taylor expansions of both $\cosh(x)$ and $\sinh(x)$ at $x=0$, as well as the assumption $\lim_{d\to \infty}-K(d) d = \infty.$ This together with \eqref{asympsub1} shows that
\begin{equation}\label{asympsup1}
\frac{D(d,q,\gamma)\omega_{d-\gamma}}{C_{-1}(d,d-q+\gamma, \sqrt{-K} u)} = O\Big( d^{\frac{\gamma+1}{2}} (\sqrt{-K})^{\gamma-q} \exp\big( \frac{u^2(1-\varepsilon)^2}{2} Kd  \big)
\Big) , \quad d \to \infty .
\end{equation}
Now we estimate the integral
$$\int_{0}^{1}   {r^{q-\gamma-1}}
\int_{0}^{1\wedge \frac{R(\sqrt{-K}u)}{r}} 
\frac{z^q(1-z^2)^{\frac{d-q}{2}-1}}{(1-r^2z^2)^{\frac{d+1}{2}} }\, \dd z
\dd r.$$
We split the integral into two parts:
$$\int_{0}^{1}   {r^{q-\gamma-1}}
\int_{0}^{1\wedge \frac{R(\sqrt{-K}u)}{r}} 
\frac{z^q(1-z^2)^{\frac{d-q}{2}-1}}{(1-r^2z^2)^{\frac{d+1}{2}} }\, \dd z
\dd r = J_1 + J_2,$$
with 
$$J_1 = \int_{0}^{{R(\sqrt{-K}u)}}   {r^{q-\gamma-1}}
\int_{0}^{1} 
\frac{z^q(1-z^2)^{\frac{d-q}{2}-1}}{(1-r^2z^2)^{\frac{d+1}{2}} }\, \dd z
\dd r$$
and 
$$J_2 = \int^1_{{R(\sqrt{-K}u)}}   {r^{q-\gamma-1}}
\int_{0}^{ \frac{R(\sqrt{-K}u)}{r}} 
\frac{z^q(1-z^2)^{\frac{d-q}{2}-1}}{(1-r^2z^2)^{\frac{d+1}{2}} }\, \dd z
\dd r .$$
Applying \cite[Theorem 1 on page 58]{wong} in the same manner as before shows that for large enough $d$:
$$\int_{0}^{1} 
\frac{z^q(1-z^2)^{\frac{d-q}{2}-1}}{(1-r^2z^2)^{\frac{d+1}{2}} }\, \dd z \leq c (1-r^2) d^{-\frac{q+1}{2}}$$
for some constant $c \in (0,\infty)$. This yields that
\begin{equation}\label{asympsup21}
J_1= O \Big( d^{-\frac{q+1}{2}} (\sqrt{-K})^{q-\gamma} \Big) , \quad d \to \infty .
\end{equation}
On the other hand, for the term $J_2$ with similar arguments we obtain
\begin{equation}\label{asympsup22}
J_2= O \big( d^{-\frac{q+1}{2}}  \big) , \quad d \to \infty .
\end{equation}
Combining \eqref{asympsup21} and \eqref{asympsup22} yields
\begin{equation}\label{asympsup2}
\int_{0}^{1}   {r^{q-\gamma-1}}
\int_{0}^{1\wedge \frac{R(\sqrt{-K}u)}{r}} 
\frac{z^q(1-z^2)^{\frac{d-q}{2}-1}}{(1-r^2z^2)^{\frac{d+1}{2}} }\, \dd z
\dd r = O \big( d^{-\frac{q+1}{2}}  \big) , \quad d \to \infty .
\end{equation}
Thus, using \eqref{asympsup1} and \eqref{asympsup2} we obtain
\begin{align*}
	\limsup_{d \to \infty} \mathbb{P}^{(u)}_{K} (  \mathbf{E} \cap \mathbf{L}\neq \varnothing ) & \leq c \lim_{d \to \infty} d^{\frac{\gamma+1}{2}} (\sqrt{-K})^{\gamma-q} \exp\big( \frac{u^2(1-\varepsilon)^2}{2} Kd  \big) d^{-\frac{q+1}{2}} \\
	& = \lim_{d \to \infty} (-Kd)^{-\frac{q-\gamma}{2}}  \exp\big( \frac{u^2(1-\varepsilon)^2}{2} Kd  \big) = 0,
\end{align*}
where in the last step we also used the assumption $\lim_{d\to \infty}-K(d) d = \infty.$

It remains to consider the critical case. That is, we assume that 
$$ -K =  -K (d) \sim \frac{\kappa}{d} \quad \textnormal{ for some constant } \kappa \in (0,\infty).$$
In this case we will show that
$$ \lim_{d\to \infty}\mathbb{P} ( \mathbf{E} \cap \mathbf{L}\neq \varnothing ) =  \rho(u,q,\gamma, \kappa) ,$$
where the constant $\rho(u,q,\gamma, \kappa)$ is as in the statement of Theorem \ref{phase transition}. Recall from above that
$$p_{K,u}=p_{-1,\sqrt{-K}u} 
= \frac{D(d,q,\gamma)\omega_{d-\gamma}}{C_{-1}(d,d-q+\gamma, \sqrt{-K} u)}
\int_{0}^{1}   {r^{q-\gamma-1}}
\int_{0}^{1\wedge \frac{R(\sqrt{-K}u)}{r}} 
\frac{z^q(1-z^2)^{\frac{d-q}{2}-1}}{(1-r^2z^2)^{\frac{d+1}{2}} }\, \dd z
\dd r.$$
Now we calculate exact asymptotic expressions, for $d \to \infty$, step by step. 

\paragraph{\textbf{Step 1:}} Recall that we have equation \eqref{asympsub1}:
$$
D(d,q,\gamma)\omega_{d-\gamma} \sim  \omega_{\gamma+1} \omega_{q-\gamma}  \Big( \frac{1}{2\pi } \Big)^{\frac{\gamma+1}{2}} d^{\frac{\gamma+1}{2}}, \quad d \to \infty .
$$ 
\paragraph{\textbf{Step 2:}} By an observation already made it is easy to see that in the current setting we have 
 \begin{align}\label{asymp2}
 \begin{split}
C_{-1}(d,d-q+\gamma, \sqrt{-K(d)} u) &  \sim {\omega_{q-\gamma}} \int_0^u e^{\frac{\kappa}{2}s^2} s^{q-\gamma-1} \,\dd s (\sqrt{-K(d)})^{q-\gamma} \\
& \sim  {\omega_{q-\gamma}} \int_0^u e^{\frac{\kappa}{2}s^2} s^{q-\gamma-1} \,\dd s \kappa^{\frac{q-\gamma}{2}} d^{-\frac{q-\gamma}{2}} , \  \quad d \to \infty .
 \end{split}
\end{align}
\paragraph{\textbf{Step 3:}} It remains to calculate the asymptotics of the expression
$$\int_{0}^{1}   {r^{q-\gamma-1}}
\int_{0}^{1\wedge \frac{R(\sqrt{-K}u)}{r}} 
\frac{z^q(1-z^2)^{\frac{d-q}{2}-1}}{(1-r^2z^2)^{\frac{d+1}{2}} }\, \dd z
\dd r.$$
As before we split the integral into two parts:
$$\int_{0}^{1}   {r^{q-\gamma-1}}
\int_{0}^{1\wedge \frac{R(\sqrt{-K}u)}{r}} 
\frac{z^q(1-z^2)^{\frac{d-q}{2}-1}}{(1-r^2z^2)^{\frac{d+1}{2}} }\, \dd z
\dd r = J_1 + J_2,$$
with 
$$J_1 = \int_{0}^{{R(\sqrt{-K}u)}}   {r^{q-\gamma-1}}
\int_{0}^{1} 
\frac{z^q(1-z^2)^{\frac{d-q}{2}-1}}{(1-r^2z^2)^{\frac{d+1}{2}} }\, \dd z
\dd r$$
and 
$$J_2 = \int^1_{{R(\sqrt{-K}u)}}   {r^{q-\gamma-1}}
\int_{0}^{ \frac{R(\sqrt{-K}u)}{r}} 
\frac{z^q(1-z^2)^{\frac{d-q}{2}-1}}{(1-r^2z^2)^{\frac{d+1}{2}} }\, \dd z
\dd r .$$
We will show that the leading term in the asymptotics is given by $J_2$. We calculate the asymptotics of $J_1$ first. Using the substitution $s = \frac{1}{R(\sqrt{-K}u)} r$ in the first step, then the substitution $v = \frac{1}{R(\sqrt{-K}u)} z$ in the second step, and the fact that $R(\sqrt{-K}u) \sim u\sqrt{\kappa/d}$ we get
\begin{align*}
	J_1 & = R(\sqrt{-K}u)^{1+q-\gamma-1} \int_{0}^{{1}}   {s^{q-\gamma-1}}
\int_{0}^{1} 
\frac{z^q(1-z^2)^{\frac{d-q}{2}-1}}{(1-R(\sqrt{-K}u)^2s^2z^2)^{\frac{d+1}{2}} }\, \dd z
\dd s \\
& = R(\sqrt{-K}u)^{1+2q-\gamma} \int_{0}^{{1}}   {s^{q-\gamma-1}}
\int_{0}^{\frac{1}{R(\sqrt{-K}u)}} 
\frac{v^q(1-R(\sqrt{-K}u)^2v^2)^{\frac{d-q}{2}-1}}{(1-R(\sqrt{-K}u)^4s^2v^2)^{\frac{d+1}{2}} }\, \dd v
\dd s  \\
&\sim (u\sqrt{\kappa})^{1+2q-\gamma}  d^{-\frac{1+2q-\gamma}{2}} \int_{0}^{{1}}   {s^{q-\gamma-1}}
\int_{0}^{\frac{1}{R(\sqrt{-K}u)}} 
\frac{v^q(1-R(\sqrt{-K}u)^2v^2)^{\frac{d-q}{2}-1}}{(1-R(\sqrt{-K}u)^4s^2v^2)^{\frac{d+1}{2}} }\, \dd v
\dd s .
\end{align*}
Observe that
$$\lim_{d \to \infty}  = \frac{ (1-R(\sqrt{-K}u)^2v^2)^{\frac{d-q}{2}-1}}{(1-R(\sqrt{-K}u)^4s^2v^2)^{\frac{d+1}{2}} } = \exp\Big( - \frac12 u^2 \kappa v^2 \Big) $$
and that
 $$\int_{0}^{{1}}   {s^{q-\gamma-1}} \dd s
\int_{0}^{{\infty}} 
 {v^q}  \exp\Big( - \frac12 u^2 \kappa v^2 \Big)  \, \dd v < \infty.
$$
Thus, we obtain
\begin{equation}\label{asymp31}
J_1 = O( d^{-\frac{1+2q-\gamma}{2}}  ) = o( d^{-\frac{1+q}{2}} ) , \quad d\to \infty .
\end{equation}
It remains to calculate the asymptotics of $J_2$. Here we only use the substitution $v = \frac{1}{R(\sqrt{-K}u)} z$ and then the fact that $R(\sqrt{-K}u) \sim u\sqrt{\kappa/d} $:
\begin{align*}
	J_2 & =  R(\sqrt{-K}u)^{1+q} \int^1_{{R(\sqrt{-K}u)}}   {r^{q-\gamma-1}}
\int_{0}^{ \frac{1}{r}} 
\frac{v^q(1-R(\sqrt{-K}u)^2v^2)^{\frac{d-q}{2}-1}}{(1-R(\sqrt{-K}u)^2 r^2v^2)^{\frac{d+1}{2}} }\, \dd v
\dd r \\
& \sim  u^{q+1}  \kappa^{\frac{q+1}{2}} d^{-\frac{q+1}{2}}  \int^1_{{R(\sqrt{-K}u)}}   {r^{q-\gamma-1}}
\int_{0}^{ \frac{1}{r}} 
\frac{v^q(1-R(\sqrt{-K}u)^2v^2)^{\frac{d-q}{2}-1}}{(1-R(\sqrt{-K}u)^2 r^2v^2)^{\frac{d+1}{2}} }\, \dd v
\dd r.
\end{align*}
Now, as before, observing that
$$\lim_{d \to \infty}  = \frac{ (1-R(\sqrt{-K}u)^2v^2)^{\frac{d-q}{2}-1}}{(1-R(\sqrt{-K}u)^2r^2v^2)^{\frac{d+1}{2}} } = \exp\Big( - \frac12 u^2 \kappa v^2 \Big) \exp\Big(  \frac12 u^2 \kappa r^2v^2 \Big) $$
and that
 $$\int_{0}^{{1}}   {r^{q-\gamma-1}} 
\int_{0}^{{\frac{1}{r}}} 
 {v^q}  e^{ - \frac12 u^2 \kappa v^2 } e^{  \frac12 u^2 \kappa r^2v^2 } \, \dd v \dd r< \infty
$$
we obtain  
\begin{equation}\label{asymp32}
	J_2  \sim  u^{q+1}  \kappa^{\frac{q+1}{2}} d^{-\frac{q+1}{2}}  \int^1_{{0}}   {r^{q-\gamma-1}}
\int_{0}^{ \frac{1}{r}} 
{v^q}  e^{ - \frac12 u^2 \kappa v^2 } e^{  \frac12 u^2 \kappa r^2v^2 } \, \dd v
\dd r.
\end{equation}
Combining \eqref{asymp31} with \eqref{asymp32}  arrive at 
\begin{align}\label{asymp3}
	\begin{split}
	& \int_{0}^{1}   {r^{q-\gamma-1}}
\int_{0}^{1\wedge \frac{R(\sqrt{-K}u)}{r}} 
\frac{z^q(1-z^2)^{\frac{d-q}{2}-1}}{(1-r^2z^2)^{\frac{d+1}{2}} }\, \dd z
\dd r \\
&  \sim  u^{q+1}  \kappa^{\frac{q+1}{2}} d^{-\frac{q+1}{2}}  \int^1_{{0}}   {r^{q-\gamma-1}}
\int_{0}^{ \frac{1}{r}} 
{v^q}  e^{ - \frac12 u^2 \kappa v^2 } e^{  \frac12 u^2 \kappa r^2v^2 } \, \dd v
\dd r.
\end{split}
\end{align}
From \eqref{asympsub1}, \eqref{asymp2} and \eqref{asymp3} we deduce that
\begin{align*}
	& \lim_{d\to \infty}\mathbb{P} ( \mathbf{E} \cap \mathbf{L}\neq \varnothing ) \\
	& = \lim_{d\to \infty} \frac{D(d,q,\gamma)\omega_{d-\gamma}}{C_{-1}(d,d-q+\gamma, \sqrt{-K(d)} u)}
\int_{0}^{1}   {r^{q-\gamma-1}}
\int_{0}^{1\wedge \frac{R(\sqrt{-K(d)}u)}{r}} 
\frac{z^q(1-z^2)^{\frac{d-q}{2}-1}}{(1-r^2z^2)^{\frac{d+1}{2}} }\, \dd z
\dd r\\
& = \lim_{d\to \infty}  \omega_{\gamma+1} \omega_{q-\gamma}  \Big( \frac{1}{2\pi } \Big)^{\frac{\gamma+1}{2}} d^{\frac{\gamma+1}{2}}  \frac{1}{\omega_{q-\gamma} \kappa^{\frac{q-\gamma}{2}}}  d^{\frac{q-\gamma}{2}}  u^{q+1}  \kappa^{\frac{q+1}{2}} \Big( \int_0^u e^{\frac{\kappa}{2}s^2} s^{q-\gamma-1} \,\dd s \Big)^{-1} \\
& \hspace{4cm} \times d^{-\frac{q+1}{2}}  \int^1_{{0}}   {r^{q-\gamma-1}} 
\int_{0}^{ \frac{1}{r}} 
{v^q}  e^{ - \frac12 u^2 \kappa v^2 } e^{  \frac12 u^2 \kappa r^2v^2 } \, \dd v
\dd r \\
& =  \omega_{\gamma+1}   (2\pi )^{-\frac{\gamma+1}{2}}  u^{q+1} \kappa^{\frac{\gamma+1}{2}} \Big( \int_0^u e^{\frac{\kappa}{2}s^2} s^{q-\gamma-1} \,\dd s \Big)^{-1} \\
& \hspace{4cm}  \times \int^1_{{0}}   {r^{q-\gamma-1}} 
\int_{0}^{ \frac{1}{r}} 
{v^q}  \exp \Big( - \frac12 u^2 \kappa v^2 (1-r^2) \Big)  \, \dd v
\dd r\\
& = \rho(u,q,\gamma, \kappa).
\end{align*}
 \enlargethispage{\baselineskip}
This finishes the proof.
\end{proof}

\bibliographystyle{plain}
\bibliography{lit}

\end{document}